\newtheorem{Theorem}{Theorem}[section]
\newtheorem{Corollary}[Theorem]{Corollary}
\newtheorem{Lemma}[Theorem]{Lemma}
\theoremstyle{definition}
\newtheorem{Definition}[Theorem]{Definition}
\def \p{\partial}
\def \<{\langle}
\def \>{\rangle}
\newcommand{\supp}{\operatorname{supp}}
\begin{document}

\title[Travel time tomography in stationary spacetimes]{Travel time tomography in stationary spacetimes}

\author[Gunther Uhlmann]{Gunther Uhlmann}
\address{Department of Mathematics, University of Washington, Seattle, WA 98195-4350, USA; Jockey Club Institute for Advanced Study, HKUST, Clear Water Bay, Hong Kong, China} 
\email{gunther@math.washington.edu}

\author[Yang Yang]{Yang Yang}
\address{Department of Computational Mathematics, Science and Engineering, Michigan State University, East Lansing, MI 48824, USA}
\email{yangy5@msu.edu}

\author[Hanming Zhou]{Hanming Zhou}
\address{Department of Mathematics, University of California Santa Barbara, Santa Barbara, CA 93106-3080, USA}
\email{hzhou@math.ucsb.edu}


\begin{abstract}
In this paper, we consider the boundary rigidity problem on a cylindrical domain in $\mathbb R^{1+n}$, $n\geq 2$, equipped with a stationary (time-invariant) Lorentzian metric. We show that the time separation function between pairs of points on the boundary of the cylindrical domain determines the stationary spacetime, up to some time-invariant diffeomorphism, assuming that the metric satisfies some a-priori conditions. 
\end{abstract}
\maketitle

%

\section{Introduction and Main Results}

Many inverse problems arise naturally in the study of physical astronomy. This is because modern astronomical observations are typically remotely sensed and need to be transformed into stable and physically meaningful representation of the source, and such transformations are normally accomplished by solving certain inverse problems. As the general theory of relativity proposes Lorentzian geometry as the underlying structure of spacetime, a suitable formulation of these problems is in such a geometric setting. For this reason inverse problems in Lorentzian geometry have been drawing more attention recently, see for instance \cite{ADH96, LOY16, KLU18, LOSU18, HU19}. The goal of these works is to determine some properties of the geometry from various timelike or lightlike observations.

In this article we are interested in determining certain almost-flat Lorentzian metrics from the measurement of time separation functions. We start by recalling some terminologies from Lorentzian geometry in order to formulate the problem. Given a Lorentzian manifold $(M,g)$ of signature $(-1,1,\dots,1)$, a point $p\in M$ is referred to as an event. A tangent vector $\zeta$ is called \emph{timelike}, \emph{null} or \emph{spacelike} if $g(\zeta,\zeta)<0, =0,$ or $>0$, resp.  Given a smooth curve $\gamma$, we say $\gamma$ is \emph{timelike}, \emph{null} or \emph{spacelike} if $\dot{\gamma}(s)$ is timelike, null or spacelike for each $s$, with the addition that $\gamma$ is called \emph{causal} if $\dot{\gamma}(s)$ is either timelike or null for each $s$. 
We say $M$ is \emph{time-orientable} if there exists a global smooth timelike vector field $X$ on $M$. Given such an $X$, we say that a tangent vector $\zeta$ is \emph{future-pointing} if $g(\zeta,X)<0$ and \emph{past-pointing} if $g(\zeta,X)>0$.  
We use the notation $z\ll y$ if there exists a future-pointing timelike curve starting at $z$ and ending at $y$ (i.e., $y$ is in the future of $z$, or alternatively, $z$ is in the past of $y$) and $z\leq y$ if there is a future-pointing causal curve starting at $z$ and ending at $y$. The set $I^+(z):=\{y:z\ll y\}$ is called the chronological future of $z$, and the set $J^+(z):=\{y:z\leq y\}$ is called the causal future of $z$.
\begin{Definition}[\cite{ON90} Chapter 14, Definition 15]
    For a causal curve $\gamma:[a,b]\to M$, let 
    $$L(\gamma):=\int_a^b\sqrt{-g_{\mu\nu}(\gamma(s))\dot{\gamma}^\mu(s)\dot{\gamma}^\nu(s)}ds.$$
    We define the {\it time separation function} $\tau_g$ as
    $$\tau_g(z,y)=\begin{cases}
    	\sup\{L(\gamma):\gamma\text{ is a causal curve from $z$ to $y$}\}&\text{if }y\in J^+(z),\\
    	0&\text{otherwise.}
    \end{cases}$$
\end{Definition}
Given two events $z,y\in M$ with $z\ll y$, there exists a future-pointing timelike curve from $z$ to $y$. After a reparametrization we may assume $\gamma: [0,T]\rightarrow M$ with $g(\dot{\gamma}(s),\dot{\gamma}(s))=-1$ and $\gamma(0)=z$, $\gamma(T)=y$. It is easy to see that $T=L(\gamma)$ and we will call $T$ the {\it proper time} between $z$ and $y$. It has the physical interpretation that this is the elapsed time recorded by a clock which passes through $z$ and $y$ along $\gamma$ in the spacetime. When the supremum is taken on, $\tau_g(z,y)$ is the proper time of the slowest trip in $M$ from $z$ to $y$. If $\tau_g(z,y)>0$, one can check that the maximum is achieved when $\gamma$ is a timelike geodesic from $z$ to $y$. 
The function $\tau_g$ involves time orientation hence is not symmetric except for the trivial case. Some basic properties of time separation functions are studied in \cite[Lemma 5]{LOY16}.


Let $(\mathbb{R}^{1+n},\delta)$, $n\geq 2$, be the standard Minkowski spacetime with $\delta$ the Minkowski metric of the signature $(-1,1,\dots,1)$, i.e. $\delta=-dt^2+e=-dt^2+(dx^1)^2+\cdots+(dx^n)^2$ with $e$ the standard Euclidean metric. Let $g_j$, $j=1,2$ be two {\it standard stationary} Lorentzian metrics on $\mathbb R^{n+1}$, such that 
$$g_j(t,x)=-\lambda_j(x) dt^2+\omega_j(x) \otimes dt+dt\otimes \omega_j(x)+h_j(x), \quad j=1,2.$$
Here $\lambda_j$ is a smooth positive function and $\omega_j$ is a smooth one-form defined on $\mathbb R^n$, $h_j$ is a smooth Riemannian metric on $\mathbb R^n$, so the metric $g_j$ is time invariant. The vector field $\p_t$ defines the global orientation. Stationary spacetimes are important objects in the Einstein field equations and the theory of black holes.


We denote by $\Omega$ a bounded domain in $\mathbb{R}^{n}$ with smooth boundary $\partial\Omega$. We assume that $\p \Omega$ is strictly convex w.r.t. the Euclidean metric, and $g_j$, $j=1,2$, differ from the Minkowski metric $\delta$ only on the cylindrical domain $\mathbb R\times \Omega$ in $\mathbb R^{n+1}$. We measure the time separation function $\tau_{g_j}$ of pairs of points on the boundary $\mathbb R\times \p\Omega$. In the meantime, since $g_j$ is time invariant, it suffices to restrict the measurement on $\Gamma:=[0,T]\times \p \Omega$ for some $T>0$ sufficiently large (depending on the metric $g_j$ and the domain $\Omega$). 
The question we would like to consider is the following: does $\tau_{g_1}|_{\Gamma\times \Gamma} = \tau_{g_2}|_{\Gamma\times \Gamma}$ imply $g_1=g_2$? Notice that $g_1=g_2=\delta$ outside $\mathbb R\times \Omega$.

The answer is no in general. An obvious obstruction exists: let $\psi:\overline\Omega\to \overline\Omega$ be a diffeomorphism with $\psi|_{\p\Omega}=Id$, and define the diffeomorphism $\Psi:\mathbb R\times\Omega\to \mathbb R\times \Omega$ by $\Psi(t,x):=(t,\psi(x))$, i.e. $\Psi=Id\times \psi$. Then it is easy to check that $\tau_{\Psi^\ast g}|_{\Gamma \times \Gamma}=\tau_g|_{\Gamma \times \Gamma}$. 
This suggests that the above question should be considered modulo such diffeomorphisms.


To obtain an affirmative answer to the above question, we will impose further restrictions on the metrics, namely $g_j$ is close to $\delta$. To be more precise, let $C^k_0(\overline{\Omega})$ denote the space of $k$-differentiable tensors $f$ with $\partial^\alpha f=0$ on $\partial\Omega$ for $|\alpha|\leq k$, we require that for $j=1,2$,
\begin{equation} \label{metricclose}
\|\lambda_j-1\|_{C^k_0(\overline{\Omega})} < \epsilon,\quad \|\omega_j\|_{C^k_0(\overline{\Omega})} < \epsilon,\quad \|h_j-e\|_{C^k_0(\overline{\Omega})} < \epsilon
\end{equation}
for some small $\epsilon>0$ and some index $k$ to be specified later. It's useful to mention that the convexity of $\p \Omega$ is preserved under small perturbation of the metric. Moreover, we impose the following compatible condition for $\omega_j$ and $h_j$.

\begin{Definition}
Given a standard stationary Lorentzian metric 
$$g=-\lambda dt^2+\omega\otimes dt+dt\otimes \omega+h$$
on $\mathbb R^{1+n}$, which differs from the Minkowski metric $\delta$ only on $\mathbb R\times \Omega$. Suppose $g$ is close to $\delta$ in the sense of \eqref{metricclose} for $\epsilon>0$ sufficiently small. We say that $g$ satisfies the {\it orthogonal assumption} if there exists a hyperplane $H\subset \mathbb R^n\setminus \Omega$, so that $\omega(\dot\sigma)=0$ along any geodesic $\sigma$, w.r.t. the Riemannian metric $h$, normal to $H$.

Given two such metrics $g_1$ and $g_2$ satisfying the orthogonal assumption w.r.t. the same hyperplane $H$. We say that $g_1$ and $g_2$ satisfy the {\it spatial distance assumption} w.r.t. the hyperplane $H$ if $d_{h_1}(x,y)=d_{h_2}(x,y)$ for all $x,y\in \p \Omega$ with $x-y$ almost normal (in Euclidean product) to $H$, i.e. the angle between $x-y$ and $H$ is in some small neighborhood of $\pi/2$. Here $d_h$ is the Riemannian distance function w.r.t. metric $h$.
\end{Definition}
The orthogonal assumption implies that there exist global coordinates $(x^1,\cdots, x^n)$, in which $\omega$ can be written as $\omega=\omega_2 dx^2+\cdots+\omega_n dx^n$, i.e. $\omega_1\equiv 0$. The spatial distance assumption also appears in \cite{Wa99} on the stability estimate of the Riemannian boundary rigidity problem. On the other hand, the spatial dimension is $\geq 2$, the spatial distance assumption only requires that $d_{h_1}=d_{h_2}$ on a small subset of $\p\Omega\times \p\Omega$. See Section \ref{construct diffeomorphism} for the details. 


Next we define sets of pairs of stationary metrics satisfying some a-priori estimates
$$\mathcal G_K:=\{(g_1,g_2)\; : \; \|g_1-g_2\|_{H^2(\overline\Omega)}\leq K \|g_1-g_2\|_{H^1(\overline\Omega)}\},\quad \quad K\geq 1.$$
We have the following main result of this article.


\begin{Theorem}\label{main theorem}
Let $g_1$ and $g_2$ be two standard stationary Lorentzian metrics in $\mathbb{R}^{1+n}$, $n\geq 2$, which differ from the standard Minkowski metric $\delta$ only on $\overline{\Omega}$, and $(g_1,g_2)\in \mathcal G_K$ for some constant $K\geq 1$. Suppose these metrics are close to $\delta$ in the $C^{k}_0(\overline{\Omega})$-norm, $k\geq 2n+6$, in the sense of \eqref{metricclose} for some sufficiently small $\epsilon>0$ (depending on $K$), and satisfy the orthogonal and spatial distance assumption w.r.t. the same hyperplane $H$. 
There exists $T_0>0$ depending on $\Omega$ and $\epsilon$, for any $T>T_0$, if $\Gamma=[0,T]\times \p\Omega$ and
$$\tau_{g_1}|_{\Gamma\times \Gamma} = \tau_{g_2}|_{\Gamma\times \Gamma},$$
then there exists a diffeomorphism $\psi$ on $\overline \Omega$ with $\psi|_{\p\Omega}=Id$, such that for $\Psi=Id\times \psi$, $g_2=\Psi^\ast g_1$.
\end{Theorem}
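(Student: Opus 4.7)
The strategy is to use the $\partial_t$-invariance of both $g_j$ to project the problem onto the spatial slice $\Omega$ and then recover $\lambda_j,\omega_j,h_j$ in two stages: the first two from the null part of the data, the last from the genuinely proper-time part. By stationarity, $\tau_{g_j}|_{\Gamma\times\Gamma}$ depends only on $(x,y,T)$ with $x,y\in\partial\Omega$, $T\ge 0$; its null limit $T_j^{\mathrm{null}}(x,y):=\inf\{T:(T,y)\in J^+_{g_j}(0,x)\}$ records the light travel time, while the data above this threshold records the proper-time maximum, so the hypothesis $\tau_{g_1}=\tau_{g_2}$ yields equality of both pieces.

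By Fermat's principle in stationary Lorentzian geometry, $T_j^{\mathrm{null}}$ is the boundary distance function of the Randers Finsler metric
\[
F_j(v)=\frac{\omega_j(v)+\sqrt{\omega_j(v)^2+\lambda_j\,h_j(v,v)}}{\lambda_j}=\alpha_j(v)+\beta_j(v),
\]
whose Riemannian part is $\alpha_j^2=h_j/\lambda_j+\omega_j\otimes\omega_j/\lambda_j^2$ and whose drift is $\beta_j=\omega_j/\lambda_j$. Since the canonical Randers splitting is unique, $T_1^{\mathrm{null}}=T_2^{\mathrm{null}}$ gives equality of boundary distance functions $d_{\alpha_1}=d_{\alpha_2}$ on $\partial\Omega$. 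Under \eqref{metricclose} each $\alpha_j$ is close to Euclidean, hence simple, and Riemannian boundary rigidity for simple metrics in the quantitative form of Wang~\cite{Wa99} --- the step which actually uses the spatial-distance assumption across pairs of boundary points almost normal to $H$ --- produces a diffeomorphism $\psi:\overline\Omega\to\overline\Omega$ fixing $\partial\Omega$ with $\psi^\ast\alpha_1=\alpha_2$. Randers boundary distance has a further closed-form gauge $\beta\mapsto\beta+df$ with $f|_{\partial\Omega}=0$, and this is exactly what the orthogonal assumption is engineered to kill: in the adapted coordinates guaranteed by the excerpt the $1$-component of each $\omega_j$ vanishes identically, so any admissible $f$ must be independent of $x^1$ and therefore zero. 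The upshot is $\psi^\ast F_1=F_2$, recovering $h_j/\lambda_j$ and $\omega_j/\lambda_j$ up to the common $\psi$.

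Replacing $g_1$ by $\Psi^\ast g_1$ with $\Psi=\id\times\psi$ reduces the problem to the case $h_1/\lambda_1=h_2/\lambda_2$ and $\omega_1/\lambda_1=\omega_2/\lambda_2$, leaving only the scalar ratio $\rho:=\lambda_2/\lambda_1$ unknown, with $\rho|_{\partial\Omega}=1$. For $T>T^{\mathrm{null}}(x,y)$ the proper-time maximizer is a unique timelike geodesic, and conservation of the momentum conjugate to $\partial_t$ expresses $\tau_{g_j}(p,q)^2$ as an explicit integral along the spatial projection of that geodesic, depending only on the Randers data (already identified) and on $\lambda_j$. Equating the two expressions yields a weighted X-ray-type identity for $\rho-1$ along a rich family of spatial geodesics of the simple metric $(\Omega,\alpha)$; injectivity of that transform on functions vanishing on $\partial\Omega$ in the near-Euclidean regime forces $\rho\equiv 1$, giving $g_2=\Psi^\ast g_1$. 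The hypothesis $(g_1,g_2)\in\mathcal G_K$ enters when promoting these linearized/gauge-level identities to the true nonlinear conclusion: it provides the interpolation inequality that upgrades $H^1$-stability of the X-ray inversions to $C^0$-control when combined with $k\ge 2n+6$ via Sobolev embedding, closing a Picard-type iteration.

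\textbf{Main obstacle.} The most delicate step I expect is the promotion of $\psi^\ast\alpha_1=\alpha_2$ to $\psi^\ast\beta_1=\beta_2$: unlike the Riemannian part, the drift of a Randers metric is determined by its boundary distance only modulo an exact one-form vanishing on $\partial\Omega$, a genuinely larger gauge than in the Riemannian problem. The orthogonal assumption, asserting that $\omega$ vanishes along $h$-geodesics normal to an external hyperplane $H$, is precisely what removes this ambiguity; arranging for a single diffeomorphism $\psi$ to implement simultaneously the Riemannian rigidity of $\alpha_j$ and the one-form rigidity of $\beta_j$ while still restricting to the identity on $\partial\Omega$ is the technical crux, and should be the content of the construction referenced in Section~\ref{construct diffeomorphism}.
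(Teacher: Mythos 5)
Your approach---passing to Fermat/Randers geometry and trying to recover first the Finsler boundary distance from the null threshold, then the conformal factor from proper-time data---is genuinely different from the paper's. The paper works directly with timelike geodesics: it recovers the scattering relation from $\tau_g$ (Lemma~\ref{srelation}), derives the Stefanov--Uhlmann integral identity \eqref{integral identity} for the difference of Hamiltonian flows, reduces it to a weighted ray transform of $m=g_1^{-1}-g_2^{-1}$ along nearly-lightlike timelike geodesics, and inverts that transform by a Fourier integral operator perturbation argument near $\delta$, with the $\mathcal G_K$ hypothesis used solely in Lemma~\ref{fourier estimate in cone} to control $\widehat{m}$ in the conic region $|\theta_1|/|\theta|\le\mu$ that the transform does not see. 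There is no Fermat principle, no two-stage Randers-then-scalar recovery, and no Picard iteration anywhere in the actual argument, and [Wa99] is cited only for the \emph{notion} of the spatial distance assumption, not as a boundary rigidity theorem.

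The central step of your plan is a gap. You assert that since the Randers splitting $F=\alpha+\beta$ is pointwise unique, $T_1^{\mathrm{null}}=T_2^{\mathrm{null}}$ (i.e.\ $d_{F_1}=d_{F_2}$ on $\p\Omega\times\p\Omega$) implies $d_{\alpha_1}=d_{\alpha_2}$. That does not follow: uniqueness of the algebraic decomposition of a Finsler norm at each tangent space says nothing about how the boundary distance function decomposes. In particular, symmetrizing $d_F$ does not give $d_\alpha$ (for a curve $\gamma$ one has $L_F(\gamma)+L_F(\gamma^{-1})=2L_\alpha(\gamma)$, but the $F$-geodesic from $x$ to $y$ and the one from $y$ to $x$ are distinct curves, and neither need be $\alpha$-minimizing, so $d_F(x,y)+d_F(y,x)\le 2d_\alpha(x,y)$ with no reverse bound in general). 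To run your program you would need a full Randers/magnetic boundary rigidity statement, not a Riemannian one, and such a result cannot be conjured from pointwise uniqueness. Relatedly, the conclusion that the spatial distance assumption feeds into Wang's theorem is a misreading: [Wa99] is a stability estimate under stronger a priori information, not a rigidity theorem producing the diffeomorphism $\psi$. Finally, your account of how $\mathcal G_K$ enters (an interpolation inequality closing a Picard iteration) does not match any step that would actually be needed; in the paper it is an $H^2$-to-$H^1$ comparison that bounds the Fourier tail of $m$ outside the observable cone, and it would still be needed in some form in a Fermat-based argument to control directions not reached by the family of geodesics used.
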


Note that $g_2=\Psi^\ast g_1$ is equivalent to the statement that $\lambda_2=\psi^\ast \lambda_1$, $\omega_2=\psi^\ast \omega_1$ and $h_2=\psi^\ast h_1$.



There are very few rigidity results in the Lorentzian context. It is known that the time separation function determines the Lorentzian metrics, up to diffeomorphism, on flat two dimensional product Lorentzian manifolds \cite{ADH96} and on universal covering spaces of real-analytic Lorentzian manifolds \cite{LOY16} (under additional assumptions). Our result works for smooth Lorentzian metrics in dimensions $3$ and higher.

An analogous inverse problem in Riemannian geometry exists and has been extensively studied. The problem is typically known as the {\it boundary rigidity problem}, and it asks to what extent can one determine a Riemannian metric on a compact smooth manifold with boundary from the distance between boundary points. In geophysical literature, this problem is known as the {\it travel time tomography}, which is concerned with the recovery of the inner structure of the Earth from the travel times of seismic waves at the surface \cite{He1905, WZ1907}. Rigidity results have been established when the metrics are close to the Euclidean one \cite{SU98, LSU03, BI10}. For general geometry, Michel conjectured that simple manifolds are boundary rigid \cite{Mi81}. We recall that a compact Riemannian manifold with boundary is {\it simple} if the boundary is strictly convex and any two points can be joined by a unique distance minimizing geodesic. Boundary rigidity is shown on simple Riemannian surfaces \cite{PU05} and generic simple metrics in dimensions $\geq 3$ \cite{SU05}, including the real-analytic ones. We refer to the recent survey \cite{SUVZ19} and the references therein for the developments in the Riemannian case.


Theorem \ref{main theorem} is a generalization of the Riemannian case \cite{SU98} to the Lorentzian geometry. When the Lorentzian metric $g=-dt^2+h$ on $\mathbb R\times \Omega$, where $h$ is a Riemannian metric, then it is easy to check that the spatial projection of an arbitrary time-like geodesic is a Riemannian geodesic w.r.t. the metric $h$. In particular, one can determine the time separation function $\tau_g$ from the length of Riemannian geodesics connecting the boundary $\p\Omega$ w.r.t. $h$. Now a weaker version of the main result of \cite{SU98} follows immediately from Theorem \ref{main theorem}. Notice that \cite{SU98} considers only the $3$ dimensional case, we improve the method so that it works in any dimension $\geq 2$. Let $d_h: \p\Omega\times \p\Omega\to [0,\infty)$ be the Riemannian boundary distance function.

\begin{Corollary}
Let $h_1$ and $h_2$ be two Riemannian metrics in $\mathbb{R}^{n}$, $n\geq 2$, which differ from the Euclidean metric $e$ only on $\overline{\Omega}$, and $(h_1,h_2)\in \mathcal G_K$ for some constant $K\geq 1$. Suppose $\|h_j-e\|_{C^{k}_0(\overline{\Omega})}\leq \epsilon$, $j=1,2$, $k\geq 2n+6$, for some sufficiently small $\epsilon>0$ (depending on $K$), and
$$d_{h_1}|_{\p\Omega \times \p\Omega} = d_{h_2}|_{\p\Omega \times \p\Omega},$$
then there exists a diffeomorphism $\psi$ on $\overline \Omega$ with $\psi|_{\p\Omega}=Id$, such that $h_2=\psi^\ast h_1$.
\end{Corollary}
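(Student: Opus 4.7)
The strategy is to reduce the statement to Theorem \ref{main theorem} by embedding each Riemannian metric $h_j$ into a standard stationary Lorentzian metric
\[
g_j := -dt^2 + h_j, \qquad j = 1, 2,
\]
on $\mathbb R^{1+n}$. Here $\lambda_j \equiv 1$ and $\omega_j \equiv 0$, so $g_j$ differs from $\delta$ only on $\mathbb R \times \Omega$, and the orthogonal assumption is satisfied for any hyperplane $H \subset \mathbb R^n \setminus \Omega$. The spatial distance assumption is subsumed by the full hypothesis $d_{h_1} = d_{h_2}$ on $\p\Omega \times \p\Omega$. Since $g_1 - g_2 = h_1 - h_2$ as $(0,2)$-tensors on $\overline\Omega$, both the closeness condition \eqref{metricclose} and the membership $(g_1, g_2) \in \mathcal G_K$ translate verbatim from the corresponding hypotheses on $h_j$.

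The key step is the identity
\[
\tau_{g_j}\bigl((t_1, x_1), (t_2, x_2)\bigr) = \sqrt{\max\bigl\{0,\, (t_2 - t_1)^2 - d_{h_j}(x_1, x_2)^2\bigr\}}
\]
for $(t_1, x_1), (t_2, x_2) \in \Gamma$. To derive this, note that any future-pointing causal curve $\gamma(s) = (t(s), x(s))$ joining two such points (which forces $t_2 \geq t_1$) may be reparametrized by $t \in [t_1, t_2]$, giving
\[
L(\gamma) = \int_{t_1}^{t_2} \sqrt{1 - |x'(t)|_{h_j}^2}\, dt, \qquad |x'|_{h_j} \leq 1.
\]
Jensen's inequality applied to the concave function $s \mapsto \sqrt{1-s^2}$ on $[0,1]$ yields $L(\gamma) \leq (t_2 - t_1)\sqrt{1 - \bar u^2}$ where $\bar u := \frac{1}{t_2 - t_1}\int |x'|_{h_j}\, dt \geq d_{h_j}(x_1, x_2)/(t_2 - t_1)$, so $L(\gamma) \leq \sqrt{(t_2 - t_1)^2 - d_{h_j}(x_1, x_2)^2}$. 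Equality is attained by constant $h_j$-speed parametrizations of $h_j$-minimizing geodesics, which yields the claimed identity. The hypothesis $d_{h_1} = d_{h_2}$ then forces $\tau_{g_1} = \tau_{g_2}$ on $\Gamma \times \Gamma$ for every $T$.

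Finally, Theorem \ref{main theorem} applied to $(g_1, g_2)$ produces a diffeomorphism $\psi$ on $\overline\Omega$ with $\psi|_{\p\Omega} = \id$ such that $g_2 = \Psi^\ast g_1$ for $\Psi = \id \times \psi$; expanding $\Psi^\ast(-dt^2 + h_1) = -dt^2 + \psi^\ast h_1$ and matching with $g_2 = -dt^2 + h_2$ gives $h_2 = \psi^\ast h_1$. I do not anticipate a genuine obstacle here: the computation of $\tau_{g_j}$ is the only step of substance, and it is a short convexity argument enabled by the product structure of the lifted metric; the remainder is a clean translation of hypotheses, which is exactly why this statement admits a corollary-level proof rather than a parallel development.
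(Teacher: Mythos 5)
Your proposal is correct and follows essentially the same route the paper sketches: lift $h_j$ to the static product metric $g_j = -dt^2 + h_j$, observe that the hypotheses of Theorem \ref{main theorem} translate verbatim (the orthogonal assumption is trivial since $\omega_j \equiv 0$, the spatial distance assumption is subsumed by $d_{h_1} = d_{h_2}$ on all of $\p\Omega\times\p\Omega$, and $g_1-g_2 = h_1-h_2$ as tensors so the $\mathcal G_K$ and $C^k_0$ conditions carry over), and then read off $h_2 = \psi^*h_1$ from $g_2 = \Psi^*g_1$. The one place where your write-up goes further than the paper's one-line remark is the Jensen-inequality derivation of $\tau_{g_j}(z,y) = \sqrt{(t_2-t_1)^2 - d_{h_j}(x_1,x_2)^2}$ (for $y \in J^+(z)$); the paper instead appeals to the fact that spatial projections of timelike geodesics in a product spacetime are $h$-geodesics, but both yield the same identity and either makes the reduction rigorous. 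One small notational point: the displayed formula with $\max\{0,\cdot\}$ should also carry the condition $t_2 \geq t_1$, since a large negative time gap would otherwise make the right side spuriously positive; this does not affect the conclusion $\tau_{g_1}=\tau_{g_2}$.
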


The assumption that $(h_1,h_2)\in \mathcal G_K$ for some $K\geq 1$ is not needed in \cite{SU98}. Indeed, for the Riemannian case, by combining the ideas of \cite{SU98} and the current article, we can drop this assumption, and generalize the main theorem of \cite{SU98} to arbitrary dimension $\geq 2$. 

\begin{Theorem}\label{Riemannian case}
Let $h_1$ and $h_2$ be two Riemannian metrics in $\mathbb{R}^{n}$, $n\geq 2$, which differ from the Euclidean metric $e$ only on $\overline{\Omega}$. Suppose $\|h_j-e\|_{C^{k}_0(\overline{\Omega})}\leq \epsilon$, $j=1,2$, $k\geq 2n+6$, for some sufficiently small $\epsilon>0$, and
$$d_{h_1}|_{\p\Omega \times \p\Omega} = d_{h_2}|_{\p\Omega \times \p\Omega},$$
then there exists a diffeomorphism $\psi$ on $\overline \Omega$ with $\psi|_{\p\Omega}=Id$, such that $h_2=\psi^\ast h_1$.
\end{Theorem}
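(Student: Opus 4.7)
My plan is to adapt the strategy of Stefanov--Uhlmann \cite{SU98} with two modifications: replace the dimension-three-specific inversion step used there by the dimension-independent microlocal analysis developed for the proof of Theorem \ref{main theorem}, and bypass the \emph{a priori} hypothesis $(h_1,h_2)\in\mathcal{G}_K$ by a Sobolev-interpolation argument that leverages the strong smoothness assumption $k\geq 2n+6$.

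First I would reduce to a gauge-fixed problem. A standard pullback construction yields a diffeomorphism $\phi\colon\overline{\Omega}\to\overline{\Omega}$ with $\phi|_{\p\Omega}=\id$, close to the identity in $C^{k-1}$, such that $f:=\phi^\ast h_2-h_1$ is solenoidal with respect to $h_1$ (i.e.\ $\delta_{h_1}f=0$) while the identity $d_{\phi^\ast h_2}=d_{h_1}$ on $\p\Omega\times\p\Omega$ is preserved. Since $h_j-e\in C^k_0(\overline{\Omega})$, the tensor $f$ vanishes to high order on $\p\Omega$ and satisfies $\|f\|_{C^{k-1}}\leq C\epsilon$, so it suffices to prove $f\equiv 0$. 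Because the metrics are close to $e$, every pair of boundary points is joined by a unique short $h_1$-geodesic that is a small perturbation of the Euclidean segment, and Taylor expanding the identity $d_{\phi^\ast h_2}(x,y)^2-d_{h_1}(x,y)^2=0$ about these geodesics produces
$$I_{h_1}f=Q(f),$$
where $I_{h_1}$ is the geodesic X-ray transform on symmetric $2$-tensors and $Q(f)$ is at least quadratic in $f$ and $\nabla f$.

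The microlocal inversion developed in the proof of Theorem \ref{main theorem}, specialized to $\lambda\equiv 1$, $\omega\equiv 0$ and combined with strict convexity of $\p\Omega$ and the solenoidal gauge, yields an estimate of the form
$$\|f\|_{L^2(\Omega)}\leq C\bigl(\epsilon+\|f\|_{C^1(\Omega)}\bigr)\|f\|_{H^1(\Omega)}.$$
The main obstacle is closing this inequality; this is exactly where \cite{SU98} uses the \emph{a priori} bound $\mathcal{G}_K$ to control $\|f\|_{H^1}$ by $\|f\|_{L^2}$. I would replace that step by Gagliardo--Nirenberg interpolation: with $s:=k-1$, Sobolev embedding (applicable since $k\geq 2n+6$) gives $\|f\|_{H^s}\leq C\epsilon$, and
$$\|f\|_{H^1}\leq C\|f\|_{L^2}^{1-1/s}\|f\|_{H^s}^{1/s}\leq C\epsilon^{1/s}\|f\|_{L^2}^{1-1/s}.$$
Substituting this and $\|f\|_{C^1}\leq C\epsilon$ into the microlocal estimate produces $\|f\|_{L^2}\leq C\epsilon^{1+1/s}\|f\|_{L^2}^{1-1/s}$, which already forces $\|f\|_{L^2}$ to be very small. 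To rule out any nonzero small solution, I would rewrite $I_{h_1}f=Q(f)$ as the fixed-point equation $f=I_{h_1}^{-1}Q(f)$ and observe that, because $Q$ vanishes to second order at $f=0$, the right-hand side is a contraction on a small $H^1$-ball; uniqueness of the fixed point then forces $f\equiv 0$, and $\psi:=\phi^{-1}$ is the required diffeomorphism.
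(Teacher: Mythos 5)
Your approach is genuinely different from the paper's, but it contains a gap at its core step, and the interpolation idea is misdirected.

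The paper's proof of Theorem~\ref{Riemannian case} stays in the normal gauge used for Theorem~\ref{main theorem} (straightening $h_j$-geodesics normal to a hyperplane $H$, so $\tilde h_j$ has $\tilde h_{j,11}=1$ and $\tilde h_{j,1\alpha}=0$). The reason $\mathcal G_K$ can be dropped there has nothing to do with interpolation: in the Lorentzian argument the cut-off $\chi$ must exclude the full cone $\{|\theta_1|/|\theta|\le\mu\}$, a conic neighborhood of a hyperplane, and Lemma~\ref{fourier estimate in cone} (which is where $\mathcal G_K$ enters) is needed to control $\widehat m$ there. In the Riemannian case the paper exploits a structural consequence of the normal gauge — when $\xi^{(0)}=\pm e_1$ the pullback geodesics are straight lines, which forces $B_{21}=\frac{\eta_1}{|\eta|}\tilde B_{21}$ and lets one cancel the factor $\psi_p^2(\eta)=\eta_1^2/(|\eta'\cdot p|^2+\eta_1^2)$ from \eqref{riem integral 1}. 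One can then use the cut-off $\chi_p$, whose bad set is only a conic neighborhood of the $(n-2)$-dimensional plane $\{\theta_1=0,\,\theta'\cdot p=0\}$; varying $p$ over finitely many directions in $\mathbb S^{n-2}$ covers all of $\mathbb R^n\setminus\{0\}$, so the estimate $\theta\widehat m(\theta)=O(\sqrt\epsilon\|\nabla m\|)$ holds everywhere and closes directly, without $\mathcal G_K$, Lemma~\ref{fourier estimate in cone}, or interpolation.

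Your plan instead passes to the solenoidal gauge, Taylor-expands the boundary distances to get $I_{h_1}f=Q(f)$, and asserts a ``microlocal estimate'' $\|f\|_{L^2}\le C(\epsilon+\|f\|_{C^1})\|f\|_{H^1}$ coming from ``the microlocal inversion developed in the proof of Theorem~\ref{main theorem}, specialized to $\lambda\equiv1$, $\omega\equiv0$.'' This is the gap: the proof of Theorem~\ref{main theorem} does not provide that estimate without $\mathcal G_K$ — precisely because the Fourier estimate is only obtained on $\{|\theta_1|/|\theta|>\mu\}$, and the complementary cone is handled via Lemma~\ref{fourier estimate in cone}. Specializing $\lambda\equiv1$, $\omega\equiv0$ does not by itself remove that obstruction, and the cancellation device the paper uses to remove it is tied to the normal gauge and the factorization of $B_{21}$; nothing analogous is available in the solenoidal gauge, so you cannot simply import the lemma. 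Your interpolation $\|f\|_{H^1}\le C\|f\|_{L^2}^{1-1/s}\|f\|_{H^s}^{1/s}$ attacks a different (and nonexistent) norm mismatch rather than the missing Fourier control, and in any case only yields $\|f\|_{L^2}=O(\epsilon^{s+1})$, which is small but not zero. The fixed-point step $f=I_{h_1}^{-1}Q(f)$ could in principle finish the argument, but it quietly presupposes a quantitative s-injectivity estimate for $I_{h_1}$ on near-Euclidean metrics with enough regularity for the contraction to hold in $H^1$; that is a substantial external input which you should identify and justify, and once you have it the interpolation step becomes superfluous (the smallness $\|f\|_{C^1}\le C\epsilon$ already places $f$ in the contraction ball). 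In short: the route (solenoidal gauge $\to$ linearization $\to$ s-injectivity $\to$ contraction) is plausible in outline, but as written the key estimate is unsubstantiated, the interpolation is a red herring, and the actual mechanism that lets the paper drop $\mathcal G_K$ — the $B_{21}$ factorization plus the lower-dimensional exceptional set of $\chi_p$ — is absent.
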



To prove the main theorem, we adopt an approach similar to the one of \cite{SU98}. We first show that the time separation function determines the scattering relation of the Hamiltonian flow of time-like geodesics. Then a key ingredient of the method is an integral identity derived in \cite{SU98}. In our case, it implies that the difference of the scattering relations of $g_1$ and $g_2$ (in coordinates) equals some weighted ray transform of $g_1-g_2$ and $\nabla (g_1-g_2)$ along time-like geodesics. The integral identity also plays an important role in recent advances on boundary and lens rigidity problems for Riemannian metrics \cite{SUV16, SUV17}, and other rigidity problems \cite{LSSU17, Zh18, PUZ19}.
The study of the injectivity of this ray transform takes the main part of article. We analyze the transform as a Fourier integral operator (FIO), and apply a perturbation argument (note that the metrics are close to the Minkowski metric) to achieve the invertibility. We remark that there are previous studies of ray transforms on Lorentzian manifolds \cite{St89, St17, LOSU18, Wa18, LOSU19, FIKO19, FIO20}, which all consider the case of null geodesics (i.e. light ray transform). In the current paper, we focus on time-like geodesics. Previous studies of weighted (Riemannian) geodesic ray transforms can be found in e.g. \cite{FSU08, PSUZ19, Zh17}.

The paper is structured as follows. In Section 2 we establish some preliminary results to be used in the proofs later. Section 3 is devoted to the proof of Theorem \ref{main theorem}. A sketch of the proof of Theorem \ref{Riemannian case} is given in Section 4.

\bigskip

\noindent {\bf Acknowledgment:} GU was partly supported by NSF, a Walker Family Endowed Professorship at UW and a Si-Yuan Professorship at HKUST. YY was partly supported by NSF grant DMS-1715178, DMS-2006881, and start-up fund from MSU.

\section{Preliminaries}

\subsection{Recovering the scattering relation from the time separation function}

We first prove a lemma to be used in the establishment of the main integral identity. The lemma roughly says that two timelike geodesics with respect to $g_1$, $g_2$, respectively which enter $\mathbb R\times \Omega$ at the same point in the same direction will exit from the same point in the same direction, provided that $\tau_{g_1}|_{\Gamma\times\Gamma} = \tau_{g_2}|_{\Gamma\times\Gamma}$.

Given a Lorentzian manifold $(M,g)$, we define the associated Hamiltonian $H_g := \frac{1}{2}\sum_{j,k=0}^{n} g^{jk}\zeta_j\zeta_k$
where $g^{-1}=(g^{jk})$ is the inverse of $g=(g_{jk})$, and the Hamiltonian vector field
$$V_g:=\left(\frac{\partial H_g}{\partial\zeta}, -\frac{\partial H_g}{\partial z}\right).$$
Fix a point $z_{(0)}\in M$ and a covector $\zeta^{(0)}\in T^\ast_{z_{(0)}}M$, 
we denote by $(z_g(s),\zeta_g(s))$ the bi-characteristic curve initiated from $(z_{(0)},\zeta^{(0)})$, that is, the solution of the Hamiltonian system, in coordinates $z=(z^0, z^1,\cdots, z^n)$,
\begin{equation} \label{Hamsys1}
\left\{
\begin{array}{rclrcl}
\displaystyle\frac{d z^j_g}{ds} & = & \displaystyle\sum^n_{l=0} g^{jl}\zeta_l, &\quad\quad z_g(0) &=& z_{(0)} \\
\displaystyle\frac{d \zeta_{gj}}{ds} & = & \displaystyle -\frac{1}{2}\sum^n_{l, m=0} \frac{\partial g^{lm}}{\partial z^j}\zeta_l\zeta_m, & \quad\quad \zeta_g(0) &=& \zeta^{(0)}. 
\end{array}
\right.
\end{equation}

Let us make two definitions on covectors. We say that a covector $\zeta$ is {\it timelike} if its dual vector $g^{-1}\zeta=(g^{ij}\zeta_j)$ is timelike. In the meantime, $\zeta$ is {\it future (past) pointing} if its dual vector $g^{-1}\zeta$ is future (past) pointing.

\begin{Lemma} \label{srelation}
Let $g_j$ ($j=1,2$) be two Lorentzian metrics in $\mathbb{R}^{1+n}$ which differ from $\delta$ only in $\mathbb R\times \overline{\Omega}$. Let $z_{(0)}\in \{0\}\times \p\Omega$, $\zeta^{(0)}\in T^*_{z_{(0)}}\mathbb R^{1+n}$ be a future-pointing timelike covector. Denote by $(z_{g_j}, \zeta_{g_j})$ the solution to \eqref{Hamsys1} of initial value $(z_{(0)}, \zeta^{(0)})$, with $g$ replaced by $g_j$, $j=1,2$. Let $\ell>0$ be the first time such that $z_{g_1}(\ell)\in \Gamma=[0,T]\times\p\Omega$ for some $T>0$ sufficiently large. 
If $\tau_{g_1}|_{\Gamma\times \Gamma} = \tau_{g_2}|_{\Gamma \times \Gamma}$, then
$$z_{g_1}(\ell)=z_{g_2}(\ell)\in \Gamma,\quad\zeta_{g_1}(\ell)=\zeta_{g_2}(\ell).$$
\end{Lemma}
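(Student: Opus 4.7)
The plan is to combine two observations valid because $g_1, g_2$ are small $C^k$ perturbations of $\delta$ on the strictly Euclidean-convex domain $\overline\Omega$: (i) each bi-characteristic $z_{g_j}$ is the \emph{unique} future-directed timelike curve maximizing $\tau_{g_j}$ between its endpoints on $\Gamma$; and (ii) the tangential derivatives of $\tau_{g_j}$ at a pair of boundary points recover the tangential parts of the initial and exit covectors of that maximizer. Together these reduce the lemma to a pointwise matching of covectors at $z_{(0)}$. Throughout, I normalize $\delta^{-1}(\zeta^{(0)}, \zeta^{(0)}) = -1$ so that the bi-characteristics are proper-time parametrized (the general case follows by rescaling), and I focus on the nontrivial situation where $\delta^{-1}\zeta^{(0)}$ has inward spatial-normal component at $z_{(0)}$, since otherwise $z_{g_1} = z_{g_2}$ is a Minkowski straight line and the conclusion is immediate.

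First I would establish (i). Since $z_{g_j}$ is a $C^1$-small perturbation of the Minkowski straight line through $(z_{(0)}, \delta^{-1}\zeta^{(0)})$, strict convexity of $\partial\Omega$ (preserved under small perturbations of the metric) forces a unique transverse first exit $y_j := z_{g_j}(\ell_j)\in\Gamma$; and the absence of conjugate or focal points along Minkowski segments persists for $z_{g_j}$, so that $z_{g_j}|_{[0,\ell_j]}$ is the unique $\tau_{g_j}$-maximizer and $\tau_{g_j}(z_{(0)}, y_j) = \ell_j$. For (ii) I invoke the first-variation formula for Lorentzian arclength: along a proper-time parametrized timelike maximizer $\gamma$ from $z$ to $y$ of length $\ell_{z,y}$,
\begin{equation*}
d_y \tau_{g_j}(z, y) = -\,g_j\bigl(\dot\gamma(\ell_{z,y}),\cdot\bigr), \qquad d_z \tau_{g_j}(z, y) = g_j\bigl(\dot\gamma(0),\cdot\bigr),
\end{equation*}
whose restriction to $T\Gamma$ at each endpoint is determined by $\tau_{g_j}|_{\Gamma\times\Gamma}$.

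With these in hand, set $y_1 := z_{g_1}(\ell)$ and let $\tilde\gamma_2$ be the unique $g_2$-maximizing geodesic from $z_{(0)}$ to $y_1$, with initial covector $\tilde\zeta^{(0)}$. By (ii) and the hypothesis $\tau_{g_1}|_{\Gamma\times\Gamma} = \tau_{g_2}|_{\Gamma\times\Gamma}$, the $T_{z_{(0)}}\Gamma$-tangential part of $\tilde\zeta^{(0)}$ equals that of $\zeta^{(0)}$. Because $g_1 = g_2 = \delta$ on $\Gamma$, both covectors have $\delta^{-1}$-norm equal to $-1$, and the sign of the remaining (spatial-normal) component is determined by the requirement that the maximizer enter $\Omega$ to reach $y_1 \neq z_{(0)}$; hence $\tilde\zeta^{(0)} = \zeta^{(0)}$. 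Uniqueness of solutions to \eqref{Hamsys1} then gives $\tilde\gamma_2 = z_{g_2}$, so $z_{g_2}(\ell) = y_1 = z_{g_1}(\ell)$. Running the same argument at the endpoint $y_1$ using $d_y\tau_{g_j}$ yields $\zeta_{g_1}(\ell) = \zeta_{g_2}(\ell)$.

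The main technical obstacle lies in step (i) and the smoothness required in (ii): one must choose $\epsilon$ sufficiently small, and $T$ sufficiently large, so that for \emph{every} relevant pair of boundary points and for \emph{both} metrics simultaneously, the corresponding bi-characteristic is the unique maximizer, has no conjugate or focal points, and $\tau_{g_j}$ is smooth on a full neighborhood of $(z_{(0)}, y_1)$ in $\Gamma\times\Gamma$. This is a uniform perturbation statement relying on Euclidean convexity of $\partial\Omega$ and $C^k_0(\overline\Omega)$ smallness of $g_j-\delta$.
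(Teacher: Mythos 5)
Your proposal is correct and follows essentially the same route as the paper: you differentiate $\tau_{g_j}$ on $\Gamma\times\Gamma$ to match the tangential parts of the endpoint covectors, use the normalization (for you, $\delta^{-1}\zeta\cdot\zeta=-1$; for the paper, the eikonal equation $g_j(\nabla\tau,\nabla\tau)=-1$) together with $g_1=g_2=\delta$ on $\Gamma$ to fix the normal components, and then invoke ODE uniqueness for the Hamiltonian flow. The only difference is cosmetic: you spell out the perturbation facts (unique transverse exit, uniqueness of the maximizer, smoothness of $\tau_{g_j}$, sign selection by entering $\Omega$) that the paper leaves implicit in the closeness-to-$\delta$ hypothesis.
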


\noindent {\bf Remark:} On $\mathbb R^{1+n}$, we have the global coordinates $z=(z^0, z^1,\cdots, z^n)$ with $z^0=t$, $(z^1,\cdots, z^n)=x$. 
Notice that geodesics have constant speed, i.e. $|\zeta |_{g}=const$, we do not require the curve $z_{g_j}$ to have unit speed, i.e. the time $\ell>0$ does not need to be the proper time. 
 
\begin{proof}
Consider the gradient $\nabla_z \tau_{g_j}(z_{(0)},z)$ for $z=(t,x)\in I^+(z_{(0)})$, i.e. $z$ is in the future of $z_{(0)}$. Then by the definition of the time separation function, one can check that
\begin{equation}\label{eikonal}
g_j(\nabla_z \tau_{g_j}(z_{(0)},z),\nabla_z \tau_{g_j}(z_{(0)},z))=-1.
\end{equation}
By the assumption $\tau_{g_1}(z_{(0)},z)=\tau_{g_2}(z_{(0)},z)$, we get that
$$\p_t \tau_{g_1}(z_{(0)},z)=\p_t \tau_{g_2}(z_{(0)},z), \quad \nabla_T \tau_{g_1}(z_{(0)},z)=\nabla_T \tau_{g_2}(z_{(0)},z),$$
where $\nabla_T$ is the tangential gradient of the spacial variables $x$ with respect to the boundary $\p \Omega$.
Notice 
$$\nabla_z \tau(z_{(0)},(t,x))=(\p_t \tau(z_{(0)},(t,x)), \nabla_T \tau(z_{(0)},(t,x)), \p_\nu \tau(z_{(0)},(t,x)),$$
where $\p_\nu$ is the spacial normal derivative with respect to $\p\Omega$. Since $g_1|_\Gamma=g_2|_\Gamma=\delta$, \eqref{eikonal} implies that $\p_\nu \tau_{g_1}(z_{(0)},z)=\p_\nu \tau_{g_2}(z_{(0)},z)$. Therefore
$$\nabla_z \tau_{g_1}(z_{(0)},z_{g_1}(\ell))=\nabla_z \tau_{g_2}(z_{(0)},z_{g_1}(\ell)),$$ 
which implies that the time-like geodesics $z_{g_1}$ and $y_{g_2}$ w.r.t. $g_1$ and $g_2$ respectively, connecting points $z_{(0)}$ and $z_{g_1}(\ell)$ must be tangent at the end point $z_{g_1}(\ell)$. Similarly, one can show that they must be tangent at the initial point $z_{(0)}$ as well. 

We conclude that the $g_2$ time-like geodesics $z_{g_2}$ and $y_{g_2}$ share the same initial data, by the uniqueness of ODEs, this implies that $z_{g_2}\equiv y_{g_2}$. In particular, since $g_1=g_2$ on $\Gamma$ and $\tau_{g_1}(z_{(0)}, z_{g_1}(\ell))=\tau_{g_2}(z_{(0)}, z_{g_1}(\ell))$, we get $z_{g_1}(\ell)=z_{g_2}(\ell)$ and $\zeta_{g_1}(\ell)=\zeta_{g_2}(\ell)$.
\end{proof}

The Riemannian case of the above lemma was established in \cite{Mi81}.

\noindent {\bf Remark:} {\it A stationary metric $g$ is invariant in time, therefore even though we only know the information of $\tau_g(z,y)$ for $z,y\in \Gamma$, we have that $\tau_g(z,y)=\tau_g(\Lambda_s z,\Lambda_s y)$, $\forall s\in\mathbb R$, where $\Lambda_s$ is the translation by $s$ in the time direction, i.e. $\Lambda_s (t,x)=(t+s,x)$. Therefore, it's enough to consider $z_{(0)}\in \{0\}\times \p\Omega$ in Lemma \ref{srelation}. On the other hand, in the proof of the main theorem, we only need to take use of those time-like geodesics which are close to light-like ones, i.e. whose tangent vectors are close to light-like ones. The above analysis implies that it suffices to measure the time separation function on $\Gamma=[0,T]\times \p \Omega$ for any $T>T_0$, where $T_0$, roughly speaking, is approximately equal to the (Euclidean) diameter of $\Omega$.}

\subsection{Pullback of the metrics by diffeomorphisms}\label{construct diffeomorphism}

In this part we simplify the form of the metrics $g_1$ and $g_2$. We will see that to solve the original problem, it is enough to consider a similar problem where the two metrics are of special structures and satisfy the same conditions as $g_1$ and $g_2$. In the rest of the paper we name various positive constants which are independent of $\epsilon$ as $C$ in the estimates.

Since $g_j$, $j=1,2$, satisfy the orthogonal assumption for the same hyperplane, there exists a hyperplane $H\subset \mathbb R^n\setminus \Omega$ and global coordinates $(x^1,x^2,\cdots,x^n)$ so that the $H=\{x^1=0\}$. 
Recall that 
$$g_j=-\lambda_j dt^2+\omega_j\otimes dt+dt\otimes \omega_j+h_j,$$
the orthogonal assumption implies that $\omega_j(\dot\sigma_j)=0$ along any $h_j$-geodesic $\sigma_j$ normal to $H$.

The assumption that $h_1$ and $h_2$ are close to the Euclidean metric implies their bi-characteristics are close to the straight lines in $\Omega\subset \mathbb R^n$. 
More precisely, we work in the global coordinates $(x^1,\cdots,x^n)$ induced by the hyperplane $H$, for $x_0\in \mathbb R^{n-1}$, denote by $x=x(s,x_0)$ and $\xi=\xi(s,x_0)$ the solution to the Hamiltonian system with respect to the $h$-geodesic flow
\begin{equation} \label{Hamsys}
\left\{
\begin{array}{rclrcl}
\displaystyle\frac{d x^j}{ds} & = & \displaystyle\sum^n_{l=1} h^{jl}\xi_l, & x(0) &=& (0,x_0) \\
\displaystyle\frac{d \xi_j}{ds} & = & \displaystyle -\frac{1}{2} \sum^n_{l, m=1} \frac{\partial h^{lm}}{\partial x^j}\xi_l\xi_m, & \xi(0) &=& (1, 0, \dots, 0) \in \mathbb{R}^{n}. 
\end{array}
\right.
\end{equation}
where $h$ is either $h_1$ or $h_2$. The hypothesis \eqref{metricclose} combined with elementary ODE theory claims 
\begin{equation} \label{solclose}
\|x-(s,x_0)\|_{C^{k-1}} + \|\xi - (1,0, \dots, 0)\|_{C^{k-1}} \leq C\epsilon
\end{equation}
for some constant $C>0$ which is uniform in $(s,x_0)$ on compact sets. Notice that $x(0)\in H\subset \mathbb R^n\setminus \Omega$, so $h(x(0))=e$, which implies that the solution of \eqref{Hamsys} are geodesics normal to $H$.

We make a change of variable to ``straighten'' the geodesics of $h$. Let $x=x(s,x_0)$ be the solution to the system \eqref{Hamsys} and let $y=y(s):=(s,x_0)$. The latter is actually the solution to the system \eqref{Hamsys} when $h=e$. Define the map $\psi(y):=x$. This map is close to the identity map in the $C^{k-1}(\overline{\Omega})$ topology due to \eqref{solclose}, it is therefore a diffeomorphism. Moreover, the pull-back metric $\psi^\ast h$ in the coordinates $y$ takes the form
\begin{equation} \label{specialform}
\left(
\begin{array}{cc}
	1 & 0_{1\times (n-1)} \\
	0_{(n-1)\times 1} & \ast_{(n-1)\times (n-1)}
\end{array}
\right)
\end{equation}
where $\ast$ denotes an $(n-1)\times (n-1)$ matrix-valued function of $y$. 

Then we consider the pull-back one form $\psi^\ast \omega$. By the orthogonal assumption, it is easy to see that $h^{-1}(\omega,\xi)=0$, where $\xi$ is the solution to the Hamiltonian system \eqref{Hamsys}. Notice that $\psi^\ast \xi=(1,0,\cdots,0)$, this implies the pull-back one form $\psi^\ast \omega$ in the $y$ coordinates takes the form
\begin{equation}\label{specialform 2}
( 0, \,\,\ast_{1\times (n-1)}),
\end{equation}
where $\ast$ is a $1\times (n-1)$ vector-valued function of $y$.


Denote by $\psi_1$, $\psi_2$ the change of variables $y\mapsto x$ related to $h_1$, $h_2$, respectively. By applying the idea of the proof of Lemma \ref{srelation} to the Riemannian case (see also \cite[Lemma 2.1]{SU98}), the spatial distance assumption for $g_1$ and $g_2$ implies that if $x, y\in \p\Omega$ and $x-y$ is almost normal to $H$, then two geodesics connecting $x$ and $y$ w.r.t. $h_1$ and $h_2$ must be tangent with each other at $x$ and $y$. In particular, $h_1$ and $h_2$ share the same scattering relation for $(x,\xi)\in T^*\mathbb R^n$, $x\in \p\Omega$ and $\xi=(1,0,\cdots,0)$.  
Then one can deduce that $\psi_1=\psi_2$ in $\mathbb{R}^{n}\backslash \Omega$. 
so $\psi_1$ and $\psi_2$ map $\Omega$ to the same new domain, say $\tilde{\Omega}$. Denote $\Psi_j:=Id\times \psi_j:\mathbb R^{1+n}\to \mathbb R^{1+n}$, $j=1,2$, and let $\tilde{g}_1:=\Psi^\ast_1 g_1$ and $\tilde{g}_2:=\Psi^\ast_2 g_2$, it follows
$$\|\tilde{g}_j - \delta \|_{C^{k-2}} \leq C\epsilon, \quad j=1,2$$
for some constant $C>0$. By \eqref{specialform} and \eqref{specialform 2}, $\tilde g_j$, $j=1,2$, have the form
\begin{equation} \label{special form of g}
\left(
\begin{array}{ccc}
   \ast & 0 & \ast_{1\times (n-1)}\\[.5em]
	0 & 1 & 0_{1\times (n-1)} \\[.5em]
	\ast_{(n-1)\times 1} & 0_{(n-1)\times 1} & \ast_{(n-1)\times (n-1)}
\end{array}
\right).
\end{equation}
Then it's easy to check that the inverse $\tilde g_j^{-1}$ takes the same form. Moreover, the time separation function is unchanged under the diffeomorphisms, i.e. $\tau_{\tilde g_j}=\tau_{g_j}$, $j=1,2$.

Now we only need to prove $\tilde{g}_1=\tilde{g_2}$, then $\Psi^{-1}_2\Psi_1$ is a (time-invariant) diffeomorphism which fixes $\mathbb R\times \partial \Omega$ and satisfies $(\Psi^{-1}_2\Psi_1)^\ast g_1 =g_2$. 
From now on we assume such a modification has been made so that we can consider the same problem under the additional assumption that the two metrics $g_1$ and $g_2$ take the form \eqref{special form of g}.



\subsection{An integral identity}

The main ingredient of our proof is an integral identity due to Stefanov and Uhlmann \cite{SU98}. We include its derivation here for the sake of completeness and to fix some notations for later discussion.

Notice that the solution of \eqref{Hamsys1} depends on the initial conditions, that is, $z=z(s,z_{(0)},\zeta^{(0)})$, $\zeta=\zeta(s,z_{(0)},\zeta^{(0)})$. We denote $X=(z,\zeta)$, therefore $X_{g_j}=X_{g_j}(s,X_{(0)})$, where $X_{(0)}:=(z_{(0)},\zeta^{(0)})$. Define a function
$$F(s):=X_{g_2}(\ell-s, X_{g_1}(s,X_{(0)}))$$
where $\ell$ is the common time at which the two geodesics $z_{g_j}$ exit $\mathbb R\times \Omega$, see Lemma \ref{srelation}. Note that
$$F(0)=X_{g_2}(\ell,X_{g_1}(0,X_{(0)})) = X_{g_2}(\ell,X_{(0)}) = X_{g_1}(\ell ,X_{(0)})$$
and 
$$F(\ell) = X_{g_2}(0,X_{g_1}(\ell,X_{(0)})) = X_{g_1}(\ell,X_{(0)}).$$
Thus
\begin{equation} \label{intzero}
\int^\ell_0 F'(s) \;ds = 0.
\end{equation}

Let $V_{g_j}$ denote the Hamiltonian vector field associated with $g_j$ for $j=1,2$. Then it is easily seen that
\begin{equation} \label{Fprime}
F'(s) = -V_{g_2}(X_{g_2}(\ell-s, X_{g_1}(s,X_{(0)}))) + \frac{\partial X_{g_2}}{\partial X_{(0)}}(\ell-s, X_{g_1}(s,X_{(0)})) V_{g_1}(X_{g_1}(s,X_{(0)}))
\end{equation}
where $\frac{\partial X_{g_2}}{\partial X_{(0)}}$ denotes the matrix derivative with respect to the initial condition $X_{(0)}$. To rewrite the first term on the right, we observe that $X_{g_2}(\ell-s, X_{g_2}(s,X_{(0)}))$ is independent of $s$, hence
$$0=\frac{d}{ds}\big|_{s=0} X_{g_2}(\ell-s, X_{g_2}(s, X_{(0)})) = -V_{g_2}(X_{g_2}(\ell-s, X_{(0)})) + \frac{\partial X_{g_2}}{\partial X_{(0)}}(\ell-s, X_{(0)})V_{g_2}(X_{(0)}).$$
Substituting this into \eqref{Fprime} we have
$$F'(s)=\frac{\partial X_{g_2}}{\partial X_{(0)}}(\ell-s, X_{g_1}(s, X_{(0)}))(V_{g_1} - V_{g_2})(X_{g_1}(s,X_{(0)})).$$
This combined with \eqref{intzero} yields the desired integral identity
\begin{equation}\label{integral identity}
\int^\ell_0 \frac{\partial X_{g_2}}{\partial X_{(0)}}(\ell-s, X_{g_1}(s, X_{(0)}))(V_{g_1} - V_{g_2})(X_{g_1}(s,X_{(0)})) \;ds=0.
\end{equation}

Recall the Hamiltonian system \eqref{Hamsys1} for the geodesic flows of the Lorentzian metric $g=g_1$ or $g_2$ with initial data $(z_{(0)}, \zeta^{(0)})$.
Let $(z_{(0)}, \zeta^{(0)})$ be such that $z_{(0)}\in \mathbb R^{1+n}\setminus \mathbb R\times \Omega$, $\zeta^{(0)}$ is a timelike covector with respect to $\delta$ and close to light-like. 
Since $g_1=g_2=\delta$ in $\mathbb R^{1+n}\setminus \mathbb R\times \Omega$, $\zeta^{(0)}$ is timelike with respect to $g_1$, $g_2$ as well. 

Denote by $z_g(s,z_{(0)},\zeta^{(0)})$ and $\zeta_g(s,z_{(0)},\zeta^{(0)})$ the solutions of the Hamiltonian  system. When $g=\delta$ the solution is
\begin{equation*} \label{Mbi}
z_\delta = z_{(0)} + s\delta \zeta^{(0)}, \quad\quad \zeta_\delta=\zeta^{(0)}.
\end{equation*}
Here $\delta \zeta=(-\zeta_0,\zeta_1,\cdots, \zeta_n)$ for $\zeta=(\zeta_0,\zeta_1,\cdots,\zeta_n)$.
The closeness condition \eqref{metricclose} implies
\begin{equation} \label{xiclose}
z_g = z_{(0)} + s\delta \zeta^{(0)} + O(\epsilon), \quad\quad \zeta_g = \zeta^{(0)}+O(\epsilon) \quad\quad \text{ in } C^{k-3}.
\end{equation}
Here the $C^{k-3}$ norm is with respect to the variables $s,z_{(0)},\zeta^{(0)}$ 
and $O(\epsilon)$ means functions with norm bounded by $C\epsilon$ with a constant $C>0$ uniform in any fixed compact set. The map $z_\delta\mapsto z_g(s,z_{(0)},\zeta^{(0)})$ is close to the identity map and thus is a diffeomorphism for small $\epsilon$.

For $X_{(0)}=(z_{(0)},\zeta^{(0)})$, taking derivatives in \eqref{xiclose} shows that
$$
\frac{\partial X_{g_2}}{\partial X_{(0)}}(s, z_{(0)}, \zeta^{(0)}) = \left(
\begin{array}{cc}
	1 & s \\
	0 & 1 
\end{array} \right)
 + O(\epsilon). \quad\quad \text{ in } C^{k-4}.
$$
This implies
\begin{equation} \label{derivative}
\frac{\partial X_{g_2}}{\partial X_{(0)}}(\ell-s, X_{g_1}(s,X_{(0)})) = \left(
\begin{array}{cc}
	1 & \ell-s \\
	0 & 1 
\end{array} \right)
 + B(s). \quad\quad \text{ in } C^{k-4}
\end{equation}
where $\ell$ is the common exit time and  
$$B(s):=B(s,X_{(0)}; g_1,g_2) = 
\left(\begin{array}{cc}
	B_{11} & B_{12} \\
	B_{21} & B_{22} 
\end{array} \right) = O(\epsilon) \quad\quad \text{ in } C^{k-4}
$$
is a $2(1+n)\times 2(1+n)$ matrix-valued function and each block $B_{ij}$ is a $(1+n)\times (1+n)$ matrix-valued function. 
Insert \eqref{derivative} into the integral identity \eqref{integral identity} to derive
\begin{equation} \label{identity2}
\int^\ell_0 \left( 
\left(
\begin{array}{cc}
	1 & \ell-s \\
	0 & 1
\end{array}
\right) + B(s)
 \right)
(V_{g_1} - V_{g_2})(X_{g_1}(s,X^{(0)})) \,ds = 0.
\end{equation}

Write $m:=g^{-1}_{1}-g^{-1}_{2}$, $\zeta=\zeta_{g_1}(s,z_{(0)},\xi^{(0)})$, and let $x$ be the spacial part of $z_{g_1}(s,z_{(0)},\zeta^{(0)})$. Simple calculation shows that
\begin{equation}\label{Hfield}
(V_{g_1} - V_{g_2})(X_{g_1}(s,X_{(0)})) = (m(x)\zeta,-\frac{1}{2}\nabla_x m(x)\zeta \cdot\zeta),
\end{equation}
where $\nabla_x m(x)\zeta\cdot\zeta$ is a vector whose components are $\frac{\partial m}{\partial x^j}\zeta\cdot\zeta$, $j=1,\dots, n$. Note that $m$ is independent of the time variable, we view $\nabla_x m$ and the full gradient $\nabla m=(0, \nabla_x m)$ as the same. 
Inserting the expression \eqref{Hfield} into \eqref{identity2} and comparing the last $(1+n)$ components gives
\begin{equation} \label{identity3}
\int^\infty_{-\infty} (\nabla_x m\zeta\cdot\zeta - 2B_{21}m\zeta + B_{22}\nabla_x m \zeta\cdot\zeta) \,ds = 0.
\end{equation}
Here the integration in $s$ can be extended to $\pm\infty$ due to the fact that $m$ is compactly supported. 

Notice that $g^{-1}_j$, $j=1,2$, are of the form \eqref{special form of g}, therefore $m=g^{-1}_1-g^{-1}_2$ is of the form 
\begin{equation} \label{special form of m}
\left(
\begin{array}{ccc}
   \ast & 0 & \ast_{1\times (n-1)}\\[.5em]
	0 & 0 & 0_{1\times (n-1)} \\[.5em]
	\ast_{(n-1)\times 1} & 0_{(n-1)\times 1} & \ast_{(n-1)\times (n-1)}
\end{array}
\right).
\end{equation}
In other words,
$$m=m_\lambda(x) dt^2+2 m_\omega(x) dt\otimes dx+m_h(x) dx^2,$$
where $m_\omega$ and $m_h$ have the form
\begin{equation}\label{special form of m 2}
( 0, \,\,\ast_{1\times (n-1)}) \quad \text{and}  \quad \left(
\begin{array}{cc}
	0 & 0_{1\times (n-1)} \\
	0_{(n-1)\times 1} & \ast_{(n-1)\times (n-1)}
\end{array}
\right)
\end{equation}
respectively.

\section{Proof of Theorem \ref{main theorem}}\label{proof of main thm}

Now we fix a future-pointing time-like covector $\zeta^{(0)}=(\varrho,\xi)$, $\varrho<-1$, $\xi\in \mathbb S^{n-1}$, which is close to light-like (see the remark at the end of Section 2.1), i.e. $\varrho$ is close to $-1$. Denote by $\xi^\perp:=\{y\in \mathbb R^n : y\cdot \xi=0\}$. Here $|\cdot|$ and `$\cdot$' are Euclidean norm and inner product. Let $z_{(0)}=(0,y)-\rho(-\varrho,\xi)$ for some $y\in \xi^\perp$ and $\rho>0$ large enough, so that $y-\rho\xi\notin \Omega$. Notice that $\rho$ can be chosen uniformly for all $y\in \xi^\perp$. In other words, $\overline \Omega\subset B_\rho$, where $B_\rho:=\{x\in\mathbb R^n: |x|<\rho\}$.


Next we apply the Fourier transform to \eqref{identity3} with respect to the spatial variable $y\in \xi^\perp$
\begin{equation*}
\int_{\xi^\perp} \int_{-\infty}^{\infty} e^{-i\eta\cdot y}\bigg(\nabla_x m(x) \zeta\cdot \zeta-2B_{21}m(x)\zeta+B_{22}\nabla_x m(x)\zeta\cdot \zeta\bigg)\,ds dy=0,
\end{equation*}
where $\eta\in \xi^\perp$ is the dual variable. Recall that $z=z_{g_1}(s,z_{(0)},\zeta^{(0)})=z_{g_1}(s,y,\varrho,\xi)$, $\zeta=\zeta_{g_1}(s,z_{(0)},\zeta^{(0)})=\zeta_{g_1}(s,y,\varrho,\xi)$, and $x=x_{g_1}$ is the spatial component of $z$.

Since $\eta\cdot\xi=0$, we deduce $\eta\cdot y = \eta \cdot (y+(s-\rho)\xi):=\eta\cdot x_\delta$ where $x_\delta:=x_\delta(s,y,\varrho,\xi)=y+(s-\rho)\xi$ is the spatial component of the geodesic with respect to the Minkowski metric $\delta$. Making the change of variable by $x_\delta\mapsto x$ we have
\begin{equation} \label{identity4}
\int_{\mathbb{R}^{n}} e^{-i\varphi(x,\eta)}(\nabla_x m\zeta\cdot\zeta- 2B_{21}m\zeta + B_{22}\nabla_x m \zeta\cdot\zeta) J^{-1}_1\;dx=0, \quad\quad \eta\in\xi^\perp.
\end{equation}
Here $\varphi(x,\eta):= \eta\cdot x_\delta(x)$ where $x_\delta(x)$ is the inverse of the change of variable $x_\delta\mapsto x$; $J_1$ is the Jacobian of the change of variable. For the following analysis, we can view the Jacobian the same as an identity, up to an $O(\epsilon)$ term in $C^{k-4}$, due to the closeness of $g_1$ to $\delta$.

Let us pause here to make some preparations for the proof later. First, we introduce the symbol class to be used. 
\begin{Definition}
Define
\begin{align*}
S^m_k :=& \left\{a(x,y,\xi)\in C^k(B_\rho\times B_\rho \times \mathbb{R}^{n}\backslash\{0\}): \text{ there exists a constant } C>0 \right. \\
  &  \text{ such that } |\partial^\alpha_x \partial^\beta_y \partial^\gamma_\xi a(x,y,\xi)| \leq C |\xi|^{m-\gamma} \text{ for } (x,y,\xi)\in B_\rho\times B_\rho \times \mathbb{R}^{1+n}\backslash\{0\} \\
  & \left. \text{ and } |\alpha|+|\beta|+|\gamma|\leq k  \right\}.
\end{align*} 
This space is equipped with the norm
$$\|a\|_{S^m_k} := \displaystyle\sum_{|\alpha|+|\beta|+|\gamma|\leq k} \sup_{(x,y,\xi)\in
B_\rho\times B_\rho \times \mathbb{R}^{n}\backslash\{0\}} \frac{|\partial^\alpha_x \partial^\beta_y \partial^\gamma_\xi a(x,y,\xi)|}{|\xi|^{m-\gamma}}$$
so that $a=O(\epsilon)$ in $S^m_k$ means $\|a\|_{S^m_k}=O(\epsilon)$.
\end{Definition}
This is the analog of H\"{o}rmander's symbol class but with finite regularity. This type of symbol class of finite regularity has been introduced before in \cite{SU97, SU98b} and some useful estimates have been developed therein.

We denote $\eta=(\eta_1,\eta')$ with $\eta'\in \mathbb R^{n-1}$. 
Let $\chi: \mathbb R^{n}\setminus \{0\}\to [0,\infty) $ be a smooth homogeneous function of order $0$ satisfying  
\begin{equation*}
\chi(\eta)=\left\{ \begin{array}{lr}
 0, \quad |\eta_1|/|\eta|< \mu/2,\\[.5em]
   1, \quad |\eta_1|/|\eta|> \mu,
          \end{array} \right.
\end{equation*}
for some small $\mu>0$. When $\eta\in \supp \chi$ (so $\eta_1\neq 0$), and $(\xi_1,\xi')=\xi\perp \eta$ (i.e. $\eta_1\xi_1+\eta'\cdot \xi'=0$), since $\xi\in \mathbb S^{n-1}$, then $\xi'\neq 0$. Thus we may fix some $p\in \mathbb S^{n-2}$, let 
\begin{equation}\label{choice of xi}
\xi_1(\eta,p)=\frac{-\eta'\cdot p}{\sqrt{|\eta'\cdot p|^2+|\eta_1 |^2}},\quad \xi'(\eta,p)=\frac{\eta_1\;p}{\sqrt{|\eta'\cdot p|^2+|\eta_1|^2}}.
\end{equation}
Our definition of $\xi(\eta,p)$ works in any dimensions, while the one in \cite{SU98} is defined specifically for 3D, due to the use of cross product.
Then we multiply \eqref{identity4} by the cut-off function $\chi$ to get
\begin{equation} \label{intidentity5}
\chi(\eta)\int_{\mathbb{R}^{n}} e^{-i\varphi(x,\eta)}(\nabla_x m\zeta\cdot\zeta - 2B_{21}m\zeta + B_{22}\nabla_x m \zeta\cdot\zeta) J^{-1}_1\;dx=0.
\end{equation}

Next we compute the integrand in \eqref{intidentity5}. The closeness condition \eqref{xiclose} implies that for $x$ and $\eta$ in a compact set
\begin{equation} \label{phaseclose}
\varphi(x,\eta) = x\cdot\eta + O(\epsilon) \quad \text{in } S^1_{k-3}.
\end{equation}
The closeness condition \eqref{xiclose} also ensures that for $x$ and $\eta$ in a compact set, $\zeta(\varrho,\eta) = \zeta^{(0)}(\varrho,\eta) + O(\epsilon)$ in $S^0_{k-3}$.  
Then the components of the term $\nabla_x m \zeta\cdot\zeta$ in \eqref{intidentity5} can be computed as
$$\frac{\partial m}{\partial x_j}\zeta\cdot\zeta =\frac{\partial m_{\lambda}}{\partial x_j}\varrho^2 + 2\frac{\partial m_{\omega}}{\partial x_j}\xi\varrho+ \frac{\partial m_{h}}{\partial x_j}\xi\cdot\xi  + O(\epsilon)\frac{\p m}{\p x_j}$$
where $O(\epsilon)$ is in $S^0_{k-3}$. Note that $\varrho$ is constant along the geodesics of $g_1$. The other terms in the integrand of \eqref{intidentity5} can be computed analogously. Recall that $B_{21},\; B_{22}=O(\epsilon)$ in $S^0_{k-4}$, and $J^{-1}_1=1+O(\epsilon)$ in $S^0_{k-4}$ too.
With these expressions \eqref{intidentity5} becomes
\begin{align*}
\chi(\eta) \int_{\mathbb{R}^{n}} & e^{-i\varphi(x,\eta)} ( \nabla_{x} m_{\lambda} \varrho^2 + 2 \nabla_{x} m_{\omega} \xi\varrho+ \nabla_x m_{h}\xi\cdot\xi  + \\ 
 & \sum^n_{\alpha,\beta=0} D_{\alpha\beta} m_{\alpha\beta} + \sum^n_{l=1}\sum^n_{\alpha,\beta=0} E_{\alpha\beta l} \frac{\partial m_{\alpha\beta}}{\partial x_l}) \;dx=0
\end{align*}
with 
\begin{equation} \label{CDsmall}
D_{\alpha\beta}=D_{\alpha\beta}(x,p,\varrho,\eta)=O(\epsilon),\; E_{\alpha\beta l}=E_{\alpha\beta l}(x,p,\varrho,\eta)=O(\epsilon) \quad \text{ in } S^0_{k-4} \text{ for } \eta \in \supp \chi. 
\end{equation}
In other words,
\begin{align}
 & \chi(\eta) \int_{\mathbb{R}^{n}} e^{-i\varphi(x,\eta)} \left( \nabla_{x} m_{\lambda} \varrho^2 + 2 \nabla_{x} m_{\omega} \xi\varrho+ \nabla_x m_{h}\xi\cdot\xi   \right) \;dx \nonumber \\
= & - \chi(\eta) \int_{\mathbb{R}^{n}} e^{-i\varphi(x,\eta)} \left( \sum^n_{\alpha,\beta=0} D_{\alpha\beta} m_{\alpha\beta} + \sum^n_{l=1}\sum^n_{\alpha,\beta=0} E_{\alpha\beta l} \frac{\partial m_{\alpha\beta}}{\partial x_l}
\right) \;dx.  \label{intidentity6}
\end{align}
The right-hand side is an oscillatory integral with the phase function $\varphi$ satisfying \eqref{phaseclose} and the amplitude of magnitude $O(\epsilon)$ in $S^0_{k-4}$. Notice that these integrands are all supported in $\Omega$.

Taking use of the structure of $m$, see \eqref{special form of m} and \eqref{special form of m 2}, we introduce the notations
\begin{align*}
A_1 m (\eta) & :=\chi(\eta) \int e^{-i\varphi(x,\eta)} \nabla_x m_\lambda (x) \,dx,\\
A_2 m (\eta, p) & := \chi(\eta)\int e^{-i\varphi(x,\eta)} \psi_p(\eta) \nabla_x m_\omega (x) p \, dx,\\
A_3 m (\eta, p) & :=\chi(\eta) \int e^{-i\varphi(x,\eta)} \psi^2_p(\eta) \nabla_x m_h (x) p \cdot p \, dx.
\end{align*}
Here $\psi_p(\eta):=\frac{\eta_1}{\sqrt{|\eta'\cdot p|^2+|\eta_1|^2}}$, which is a homogeneous function of order $0$ in the support of $\chi$. 
Thus we can write \eqref{intidentity6} as
\begin{align*}
Am(\varrho, \eta,p):=\varrho^2 A_1 m (\eta)+ & 2 \varrho A_2 m (\eta,p)+A_3 m(\eta,p) \\
& =\chi(\eta)\int e^{-i\varphi(x,\eta)}(O(\epsilon) m(x)+O(\epsilon) \nabla m(x))\, dx.
\end{align*}
Notice that $A_2 m$ is odd in $p$, while $A_3 m$ is even in $p$ for $p\in \mathbb S^{n-2}$. Therefore
\begin{align*}
A_2 m (\eta,p) & =\frac{1}{4\varrho} \left( A\, m(\varrho,\eta,p)-A\, m(\varrho,\eta,-p) \right),\\
\varrho^2 A_1 m (\eta)+A_3 m (\eta,p) & =\frac{1}{2} \left( A\, m(\varrho,\eta,p)+A\, m(\varrho,\eta,-p) \right).
\end{align*}
Moreover, let $-1>\varrho_1>\varrho_2$ be close to $-1$ and fixed (so the choice of $\epsilon$ later is independent of $\varrho$), since $A_3 m_h$ is independent of $\varrho$, we get that
\begin{align*}
& A_1 m (\eta)=\frac{1}{2(\varrho_2^2-\varrho_1^2)}\left( A\,m(\varrho_2,\eta,p)+A\,m(\varrho_2,\eta,-p)-A\,m(\varrho_1,\eta,p)-A\,m(\varrho_1,\eta,-p)\right),\\
& A_3 m(\eta,p)=\frac{1}{2} \left( A\, m(\varrho,\eta, p)+A\, m(\varrho,\eta, -p) \right)-\varrho^2 A_1 m (\eta).
\end{align*}
In other words, one can determine $A_j m$, $j=1,2,3$, and
\begin{equation}\label{A_j m}
A_j m=\chi(\eta)\int e^{-i\varphi(x,\eta)}(O(\epsilon) m(x)+O(\epsilon) \nabla m(x)) \, dx
\end{equation}
with $O(\epsilon)$ in $S^0_{k-4}$.

Let $a(x)$ be a smooth cut-off function with $\supp{a}\subset B_\rho$ and $a(x)=1$ for $x\in\Omega$, then $m=a(x)m$, $\nabla m=a(x)\nabla m$.
We multiply the identities \eqref{A_j m} by $a(y)e^{i\varphi(y,\eta)}\psi_p^{j-1}(\eta)$ and integrate in $\eta$ to obtain 
\begin{equation}\label{A_j m 2}
\begin{split}
P_j m(y,p) & :=a(y)\int e^{i\varphi(y,\eta)}\psi_p^{j-1}(\eta)A_j m(\eta,p) \,d\eta\\
& =a(y)\int \int \chi(\eta) e^{i(\varphi(y,\eta)-\varphi(x,\eta))} \psi_p^{j-1}(\eta) a(x) (O(\epsilon) m(x)+O(\epsilon) \nabla m(x))\, dx d\eta.
\end{split}
\end{equation}
In particular
\begin{equation}\label{P_j m}
\begin{split} 
P_1 m(y,p) &= a(y) \int \int
\chi(\eta) e^{i(\varphi(y,\eta) - \varphi(x,\eta))}a(x) \nabla m_\lambda (x) \;dxd\eta, \\
P_2 m(y,p) &= a(y) \int \int
\chi(\eta) e^{i(\varphi(y,\eta) - \varphi(x,\eta))} \psi_p^2(\eta) a(x) \nabla m_\omega (x) p \;dxd\eta, \\
P_3 m(y,p) &= a(y) \int \int
\chi(\eta) e^{i(\varphi(y,\eta) - \varphi(x,\eta))} \psi_p^4(\eta) a(x) \nabla m_h (x) p\cdot p \;dxd\eta.
\end{split}
\end{equation}

Next we would like to make a change of variable so that the phase function is simplified. By Taylor's expansion
$$\varphi(y,\eta) - \varphi(x,\eta) = (y-x)\cdot \theta(x,y,\eta), \quad\quad \theta(x,y,\eta):= -\int^1_0 \nabla\varphi(y+t(x-y),\eta) \,dt.$$
It is easy to see that $\theta$, as a function of $\eta$, is homogeneous of order $1$ and $\theta = \eta + O(\epsilon)$ in $S^1_{k-4}$. Therefore, the map $\eta\mapsto \theta$ is a change of variable whose Jacobian $J_2:=\det(\frac{d\theta}{d\eta})$ satisfies $J_2 = 1 + O(\epsilon)$ in $S^0_{k-5}$. Replacing $\eta\mapsto \theta$ in \eqref{A_j m 2} and \eqref{P_j m} gives
\begin{equation}\label{eta to theta 1}
\begin{split} 
P_1 m(y,p) &= \int \int e^{i(y-x)\cdot\theta} a(y)\chi (\eta(x,y,\theta)) a(x) \nabla m_\lambda (x) J^{-1}_2\;dxd\theta, \\
P_2 m(y,p) &= \int \int e^{i(y-x)\cdot\theta} a(y)\chi (\eta(x,y,\theta)) \psi_p^2(\eta(x,y,\theta)) a(x) \nabla m_\omega (x) p J^{-1}_2\;dxd\theta, \\
P_3 m(y,p) &= \int \int e^{i(y-x)\cdot\theta} a(y)\chi (\eta(x,y,\theta)) \psi_p^4(\eta(x,y,\theta)) a(x) \nabla m_h (x) p\cdot p  J^{-1}_2\;dxd\theta,
\end{split}
\end{equation}
and for each $j=1,2,3$
\begin{equation}\label{eta to theta 2}
\begin{split}
P_j m(y,p) = & \\
\int \int e^{i(y-x)\cdot\theta} & a(y)\chi (\eta(x,y,\theta)) \psi_p^{j-1}(\eta(x,y,\theta)) a(x) (O(\epsilon) m(x)+O(\epsilon) \nabla m(x)) J^{-1}_2\, dx d\theta.
\end{split}
\end{equation}
In view of $ J^{-1}_2 = 1+O(\epsilon)$ in $S^0_{k-5}$ and $\eta=\theta+O(\epsilon)$ in $S^1_{k-4}$, we get for any $l\geq 0$ 
$$a(y) \chi(\eta(x,y,\theta)) \psi_p^l (\eta(x,y,\theta)) a(x)  J^{-1}_2 = a(y)\chi(\theta) \psi_p^l(\theta) a(x) + O(\epsilon) \quad\quad \text{ in } S^0_{k-5}.$$
Combine \eqref{eta to theta 1} and \eqref{eta to theta 2}, the above analysis implies
\begin{equation}\label{eta to theta 3}
\begin{split}
\int \int e^{i(y-x)\cdot\theta} a(y)\chi (\theta)  & a(x) \nabla m_\lambda (x) \;dxd\theta\\ 
& =\int \int e^{i(y-x)\cdot\theta} (O(\epsilon) m(x)+O(\epsilon) \nabla m(x))\, dx d\theta,\\
\int \int e^{i(y-x)\cdot\theta} a(y)\chi (\theta) \psi^2_p(\theta) & a(x) \nabla m_\omega (x) p  \;dxd\theta\\
 & =\int \int e^{i(y-x)\cdot\theta} (O(\epsilon) m(x)+O(\epsilon) \nabla m(x))\, dx d\theta,\\
\int \int e^{i(y-x)\cdot\theta} a(y)\chi (\theta) \psi^4_p(\theta) & a(x) \nabla m_h (x) p\cdot p \;dxd\theta\\
 & =\int \int e^{i(y-x)\cdot\theta} (O(\epsilon) m(x)+O(\epsilon) \nabla m(x))\, dx d\theta.
\end{split}
\end{equation}
The $O(\epsilon)$ on the right hand side of the above equalities are in $S^0_{k-5}$.

To simplify the notations, we still use $P_j m$, $j=1,2,3$, to denote the left hand side of the three equalities in \eqref{eta to theta 3} respectively.
The above argument implies that
\begin{Lemma}\label{P_j new}
For $j=1,2,3$, 
$$P_j m(y,p)=\int \int e^{i(y-x)\cdot\theta} O(\epsilon)m(x) \;dxd\theta +\int \int e^{i(y-x)\cdot\theta} O(\epsilon)\nabla_x m(x) \;dxd\theta,$$
where $O(\epsilon)$ are in $S^0_{k-5}$.
\end{Lemma}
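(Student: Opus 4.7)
The plan is to derive the lemma by executing a change of variable $\eta \mapsto \theta$ that linearizes the phase $\varphi(y,\eta)-\varphi(x,\eta)$, while tracking how each factor behaves in the finite-regularity symbol classes. Starting from the definition \eqref{A_j m 2} of $P_j m$, I would substitute the identity \eqref{A_j m} for $A_j m$, which expresses $A_j m$ as an oscillatory integral in $x$ whose amplitude is of the form $O(\epsilon) m(x) + O(\epsilon)\nabla m(x)$ with the error symbols lying in $S^0_{k-4}$. This turns $P_j m(y,p)$ into a double oscillatory integral in $(x,\eta)$ with phase $\varphi(y,\eta)-\varphi(x,\eta)$ and an amplitude built from the cutoffs $a(y), a(x), \chi(\eta)$, the weight $\psi_p^{j-1}(\eta)$, and the $O(\epsilon)$ factor from \eqref{A_j m}.

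The key technical step is the standard trick for linearizing phases of this shape. Setting
$$\theta(x,y,\eta) := -\int_0^1 \nabla_\eta\varphi(y+t(x-y),\eta)\,dt,$$
Taylor's theorem gives $\varphi(y,\eta)-\varphi(x,\eta) = (y-x)\cdot\theta$. From \eqref{phaseclose} we have $\varphi(x,\eta) = x\cdot\eta + O(\epsilon)$ in $S^1_{k-3}$, and differentiating in $\eta$ yields $\theta = \eta + O(\epsilon)$ in $S^1_{k-4}$. For sufficiently small $\epsilon$ the map $\eta\mapsto\theta$ is therefore a diffeomorphism, with Jacobian $J_2 = 1 + O(\epsilon)$ in $S^0_{k-5}$ after one more differentiation.

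After performing this change of variable inside the double integral and substituting $\eta(x,y,\theta) = \theta + O(\epsilon)$ into the cutoff $\chi$, the weight $\psi_p^{j-1}$, and the inverse Jacobian, every extra term produced is itself $O(\epsilon)$ in $S^0_{k-5}$ and thus can be absorbed into the amplitudes multiplying $m$ and $\nabla m$. Since $a(y)a(x)$ is independent of $\theta$ and compactly supported, it plays no role beyond confining the $x$-integration to $B_\rho$. What remains is precisely an oscillatory integral with phase $(y-x)\cdot\theta$ and amplitudes $O(\epsilon)$ in $S^0_{k-5}$ acting on $m(x)$ and $\nabla m(x)$ separately, which is the claim.

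The main obstacle I anticipate is purely bookkeeping: one must verify that each composition, derivative in the chain rule, and substitution into a symbol of order $0$ or $1$ loses at most the advertised amount of regularity. Starting with phase regularity $k-3$ and amplitude regularity $k-4$, one differentiation to form $J_2$ drops to $k-5$, which is consistent with the $S^0_{k-5}$ class in the statement. The required estimates for composition and change of variable within these finite-regularity symbol classes are exactly those developed in \cite{SU97, SU98b}, so no new analytic machinery is needed beyond careful invocation of those estimates.
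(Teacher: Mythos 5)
Your change-of-variable computation and the symbol-class bookkeeping are correct and match the paper's strategy in spirit, but you have overlooked that the symbol $P_j m$ appearing in the lemma has been \emph{redefined} immediately before the lemma is stated. The paper writes, just below \eqref{eta to theta 3}: ``To simplify the notations, we still use $P_j m$, $j=1,2,3$, to denote the left hand side of the three equalities in \eqref{eta to theta 3} respectively.'' Thus the $P_j m$ in the lemma is the \emph{principal term}
$$P_1 m(y,p)=\int\!\!\int e^{i(y-x)\cdot\theta}\,a(y)\chi(\theta)\,a(x)\,\nabla m_\lambda(x)\,dx\,d\theta,$$
and similarly for $P_2,P_3$ with the extra weights $\psi_p^2(\theta),\psi_p^4(\theta)$ and $\nabla m_\omega\,p$, $\nabla m_h\,p\cdot p$. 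Your argument, which substitutes \eqref{A_j m} into \eqref{A_j m 2} and changes variables, establishes the desired estimate for the \emph{original} $P_j m$ of \eqref{A_j m 2}, i.e.\ for the right-hand side \eqref{eta to theta 2}, not for this renamed object.

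To close the gap you must use the \emph{other} representation of the original $P_j m$, namely \eqref{P_j m}/\eqref{eta to theta 1}, which expresses it as an oscillatory integral acting directly on $\nabla m_\lambda$ (resp.\ $\nabla m_\omega p$, $\nabla m_h p\cdot p$) with amplitude
$a(y)\chi(\eta(x,y,\theta))\psi_p^l(\eta(x,y,\theta))a(x)J_2^{-1}$. Expanding this amplitude, using $\eta=\theta+O(\epsilon)$ and $J_2^{-1}=1+O(\epsilon)$, as
$a(y)\chi(\theta)\psi_p^l(\theta)a(x)+O(\epsilon)$ in $S^0_{k-5}$ splits \eqref{eta to theta 1} into the renamed $P_j m$ plus an $O(\epsilon)$ remainder term of the same form as the right-hand side. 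Equating this with \eqref{eta to theta 2} (the estimate you derived) and moving the remainder to the other side then yields precisely \eqref{eta to theta 3}, i.e.\ the lemma. In short: you need both representations \eqref{eta to theta 1} and \eqref{eta to theta 2} of the same quantity, and the step you omitted is exactly the amplitude expansion applied to \eqref{eta to theta 1}.

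A minor slip: in your definition of $\theta$, the gradient must be taken with respect to the spatial argument of $\varphi$, not with respect to $\eta$; with $\nabla_\eta$ you would get $\theta\approx x_\delta$, not $\theta\approx\eta$, and the phase would not linearize to $(y-x)\cdot\theta$.
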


To deal with the integrals with amplitudes of order $O(\epsilon)$, we recall the following result established in \cite[Theorem A1]{SU97}. 
\begin{Lemma}\label{FIO}
Let $P$ be the operator
$$Pf(x):=\int \int e^{i(y-x)\cdot\xi}a(x,y,\xi) f(y) \,dyd\xi$$
If for all $\xi\in\mathbb{R}^n$,
$$\sum_{|\alpha|+|\beta|\leq 2n+1} \int \int  |\partial^\alpha_x\partial^\beta_y a(x,y,\xi)| \,dxdy \leq M.$$
Then $P:L^2(\mathbb{R}^n)\rightarrow L^2(\mathbb{R}^n)$ is a bounded operator, and 
$\|P\|_{L^2\to L^2} \leq CM$ for some constant $C>0$.
\end{Lemma}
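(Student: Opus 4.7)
The plan is to reduce the $L^2$-boundedness of $P$ to Schur's test after passing to the Fourier side in the $(x,y)$-variables. The Fourier transform $\hat a(\eta_1, \eta_2, \xi) := \int\int e^{-i(x\cdot\eta_1 + y\cdot\eta_2)} a(x,y,\xi)\, dx\, dy$ is well defined because $a(\cdot,\cdot,\xi) \in L^1_{x,y}$ uniformly in $\xi$ (the $\alpha=\beta=0$ term of the hypothesis). Integration by parts combined with the hypothesis yields $|\eta_1^\alpha \eta_2^\beta \hat a(\eta_1, \eta_2, \xi)| \leq M$ for $|\alpha|+|\beta|\leq 2n+1$, hence
$$|\hat a(\eta_1, \eta_2, \xi)| \leq CM(1+|\eta_1|+|\eta_2|)^{-(2n+1)},$$
uniformly in $\xi$. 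Since $2n+1>2n$, this gives $\hat a(\cdot,\cdot,\xi)\in L^1_{\eta_1,\eta_2}$ uniformly in $\xi$; integrating out only $\eta_2\in\mathbb R^n$ moreover retains the sharper bound
$$\int_{\mathbb R^n} |\hat a(\eta_1, \eta_2, \xi)|\, d\eta_2 \leq CM (1+|\eta_1|)^{-(n+1)},$$
with a symmetric estimate in $\eta_1$.

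For $f \in \mathcal{S}(\mathbb R^n)$ I would substitute Fourier inversion for $a$ into $Pf$, carry out the $y$-integration to produce $\hat f(-(\xi+\eta_2))$, and change variables $\omega = \eta_1 - \xi$, $\sigma = -(\xi+\eta_2)$. After these steps $Pf$ takes the form $Pf(x) = \int e^{ix\cdot\omega} G(\omega)\, d\omega$ with $G(\omega) = \int K(\omega,\sigma)\, \hat f(\sigma)\, d\sigma$ and frequency-side kernel
$$K(\omega, \sigma) := (2\pi)^{-2n}\int \hat a(\omega+\xi,\, -\sigma-\xi,\, \xi)\, d\xi.$$
The interchange of the four-fold integral is justified by Fubini together with the $L^1$-integrability of $\hat a$ and the Schwartz decay of $\hat f$.

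The final step is Schur's test on $K$. Changing the order of integration and setting $\eta_2 = -\sigma-\xi$ reduces $\int|K(\omega,\sigma)|\,d\sigma$ to $C\int_\xi\int_{\eta_2}|\hat a(\omega+\xi,\eta_2,\xi)|\,d\eta_2\, d\xi$, which by the sharpened estimate is dominated by $CM\int(1+|\omega+\xi|)^{-(n+1)}\,d\xi\leq CM$ uniformly in $\omega$, precisely because $n+1>n$. The analogous bound $\int|K(\omega,\sigma)|\,d\omega\leq CM$ follows symmetrically. Schur's lemma then gives $\|G\|_{L^2}\leq CM\|\hat f\|_{L^2}$, and Plancherel yields $\|Pf\|_{L^2}\leq CM\|f\|_{L^2}$; the bound extends to all of $L^2$ by density. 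The main obstacle is the double use of the regularity index $2n+1$: it must simultaneously provide $L^1$-integrability of $\hat a$ on $\mathbb R^{2n}$ (requiring exponent $>2n$) and retain $(n+1)$-order decay on the one-variable partial integral so the outer $\xi$-integral in $K$ converges (requiring $>n$). The value $2n+1$ is the minimal integer accommodating both constraints.
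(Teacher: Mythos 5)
The paper does not prove this lemma; it cites it directly from [SU97, Theorem~A1], so there is no internal proof to compare against. Your argument is the correct and standard one for such $L^1$-type Calder\'on--Vaillancourt estimates, and to my knowledge it is essentially the argument of the cited reference: conjugate by the Fourier transform in $(x,y)$, use the hypothesis with $2n+1$ derivatives in $L^1$ to obtain $|\hat a(\eta_1,\eta_2,\xi)|\le CM(1+|\eta_1|+|\eta_2|)^{-(2n+1)}$ uniformly in $\xi$, pass to the frequency-side kernel $K(\omega,\sigma)=(2\pi)^{-2n}\int\hat a(\omega+\xi,-\sigma-\xi,\xi)\,d\xi$, and close with Schur's test. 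Your bookkeeping of the exponent is right: integrating out one frequency block leaves $(1+|\eta_1|)^{-(n+1)}$, and it is precisely the resulting $n+1>n$ that makes the $d\xi$-integral in the Schur bound converge.

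One step deserves more care than you give it: the interchange of the four-fold integral is \emph{not} justified by Fubini alone. The quadruple integrand $|\hat a(\eta_1,\eta_2,\xi)|\,|f(y)|$ is not absolutely integrable, because $\|\hat a(\cdot,\cdot,\xi)\|_{L^1_{\eta_1,\eta_2}}$ is bounded in $\xi$ but not integrable in $\xi$ (already when $a$ is independent of $\xi$). The decay in $\xi$ only materializes \emph{after} the $y$-integration has produced $\hat f(-(\xi+\eta_2))$; the $y$-oscillation is essential and Fubini cannot be invoked before it is exploited. The routine repair is to regularize: replace $a$ by $a(x,y,\xi)\chi(\epsilon\xi)$ with $\chi\in C_c^\infty$, $\chi\equiv 1$ near $0$. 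The hypothesis (which involves only $x,y$-derivatives) is unchanged, every integral becomes absolutely convergent, your computation gives $\|P_\epsilon\|_{L^2\to L^2}\le CM$ uniformly in $\epsilon$, and one passes to $\epsilon\to 0$ on Schwartz functions. (Alternatively, integrate by parts in $y$ first to manufacture decay in $\xi$.) The conclusion and all the core estimates are unaffected, but as phrased your Fubini claim is not literally correct and should be replaced by this regularization.
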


Applying Lemma \ref{FIO} to Lemma \ref{P_j new}, we have for $k-5\geq 2n+1$ (this is the reason we assume that $g_j$ is close to $\delta$ in $C^{k}$-norm, $k\geq 2n+6$,  in Theorem \ref{main theorem}),
$$
P_j m(y,p)= O(\epsilon\|m\|) \; + \; O(\epsilon\|\nabla m\|) \text{ in } L^2(\mathbb{R}_y^{n}),\quad j=1,2,3,
$$
where $\|m\|:=\|m\|_{L^2(\mathbb R^n)}$. By the Poincar\'{e}'s inequality $\|m\|_{L^2} \leq C \|\nabla m\|_{L^2}$ (note that $m$ is compactly supported), the first term on the right hand side can be absorbed into the second term, thus
$$
P_j m(y,p)= O(\epsilon\|\nabla m\|)  \quad \text{ in } L^2(\mathbb{R}^{n}).
$$
It follows that 
\begin{align*}
\|\chi(\theta)\theta \hat m_\lambda(\theta)\|_{L^2}^2& =(\nabla m_\lambda, P_1 m )_{L^2(\mathbb R^n_y)}=O(\epsilon \|\nabla m\|^2),\\
\|\chi(\theta)\psi_p(\theta)\theta \hat m_\omega (\theta) p\|_{L^2}^2& =(\nabla m_\omega p, P_2 m )_{L^2(\mathbb R^n_y)}=O(\epsilon \|\nabla m\|^2),\\
\|\chi(\theta)\psi^2_p(\theta) \theta \hat m_h (\theta) p\cdot p\|_{L^2}^2& =( \nabla m_h p\cdot p, P_3 m )_{L^2(\mathbb R^n_y)}=O(\epsilon \|\nabla m\|^2),
\end{align*}
which 
yield
\begin{align*} \label{lc}
\theta \hat m_\lambda (\theta) & =O(\sqrt{\epsilon}\|\nabla m\|),\\
\psi_p(\theta) \theta \hat m_\omega(\theta) p & =O(\sqrt{\epsilon}\|\nabla m\|),\\
\psi^2_p(\theta)\theta \hat m_h (\theta) p\cdot p & =O(\sqrt{\epsilon}\|\nabla m\|)
\end{align*}
in $L^2(|\theta_1|/|\theta|>\mu)$ with $\mu>0$ a small parameter. Here $\hat m$ is the Fourier transform of $m$.

Recall that $\psi_p(\theta)=\frac{\theta_1}{\sqrt{|\theta'\cdot p|^2+|\theta_1|^2}}$, therefore when $|\theta_1|/|\theta|>\mu$, we have $|\psi_p(\theta)|>\mu$. This implies
$$\theta \hat m_\lambda (\theta) =O(\sqrt{\epsilon}\|\nabla m\|),\quad  \theta \hat m_\omega(\theta) p  =O(\frac{\sqrt{\epsilon}}{\mu}\|\nabla m\|),\quad \theta \hat m_h (\theta) p\cdot p  =O(\frac{\sqrt{\epsilon}}{\mu^2}\|\nabla m\|)$$
in $L^2(|\theta_1|/|\theta|>\mu)$.
Notice that $p$ is an arbitrary vector on $\mathbb S^{n-2}$, taking into account the structure of $m_\omega$ and $m_h$ \eqref{special form of m 2}, we derive that
\begin{equation}\label{>mu}
\theta \hat m_\lambda (\theta) =O(\sqrt{\epsilon}\|\nabla m\|),\quad  \theta \hat m_\omega(\theta)   =O(\frac{\sqrt{\epsilon}}{\mu}\|\nabla m\|),\quad \theta \hat m_h (\theta)  =O(\frac{\sqrt{\epsilon}}{\mu^2}\|\nabla m\|)
\end{equation}
in $L^2(|\theta_1|/|\theta|>\mu)$. 

Then we estimate the $L^2$-norm of $\theta \hat m(\theta)$ in the region $\{|\theta_1|/|\theta|\leq \mu\}$, which is unbounded.

\begin{Lemma}\label{fourier estimate in cone}
Assume that $\|m\|_{H^2}\leq K\|m\|_{H^1}$ for some constant $K>0$, then 
\begin{equation}\label{L2 norm of m in cone}
\|\theta \hat m(\theta)\|_{L^2(|\theta_1|/|\theta|\leq \mu)}\leq \frac{1}{3}\|\nabla m\|_{L^2}
\end{equation}
for sufficiently small $\mu$, depending on $K$.
\end{Lemma}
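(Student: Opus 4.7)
The goal is to show that only a small portion of the Fourier mass $\int_{\R^n}|\theta|^2|\hat m|^2\,d\theta=\|\nabla m\|_{L^2}^2$ (by Plancherel) is concentrated in the narrow cone $C_\mu:=\{\theta:|\theta_1|\leq \mu|\theta|\}$ around the hyperplane $\theta_1=0$. Two ingredients will drive the argument: the a priori condition $\|m\|_{H^2}\leq K\|m\|_{H^1}$ controls the high-frequency tail, while the compact support of $m$ inside $\Omega$ together with Poincar\'e's inequality controls the low frequencies.

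\textbf{Step 1 (Fourier consequence of the $H^2$-bound).} Since $m$ is compactly supported in $\Omega$, Poincar\'e gives $\|m\|_{L^2}\leq C\|\nabla m\|_{L^2}$, and integration by parts yields $\|\nabla^2 m\|_{L^2}^2=\||\theta|^2\hat m\|_{L^2}^2$. Combined with $\|m\|_{H^2}\leq K\|m\|_{H^1}$ this produces the key bound
$$\||\theta|^2\hat m\|_{L^2(\R^n)}\leq CK\|\nabla m\|_{L^2}.$$

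\textbf{Step 2 (High/low frequency split).} For a threshold $R>0$ to be chosen, decompose
$$\int_{C_\mu}|\theta|^2|\hat m|^2\,d\theta=\int_{C_\mu\cap\{|\theta|>R\}}|\theta|^2|\hat m|^2\,d\theta+\int_{C_\mu\cap\{|\theta|\leq R\}}|\theta|^2|\hat m|^2\,d\theta.$$
On the high-frequency part, multiply and divide by $|\theta|^2$ and use Step 1:
$$\int_{|\theta|>R}|\theta|^2|\hat m|^2\,d\theta\leq R^{-2}\int|\theta|^4|\hat m|^2\,d\theta\leq CK^2R^{-2}\|\nabla m\|_{L^2}^2.$$
On $C_\mu\cap B_R$, use the uniform bound $\|\hat m\|_{L^\infty}\leq \|m\|_{L^1(\Omega)}\leq |\Omega|^{1/2}\|m\|_{L^2}\leq C\|\nabla m\|_{L^2}$ (compact support, Cauchy--Schwarz, Poincar\'e) together with the elementary volume estimate $|C_\mu\cap B_R|\leq C\mu R^n$ to obtain
$$\int_{C_\mu\cap B_R}|\theta|^2|\hat m|^2\,d\theta\leq R^2\cdot C\mu R^n\|\nabla m\|_{L^2}^2=C\mu R^{n+2}\|\nabla m\|_{L^2}^2.$$

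\textbf{Step 3 (Choice of parameters).} Summing yields
$$\|\theta\hat m(\theta)\|_{L^2(C_\mu)}^2\leq \bigl(CK^2R^{-2}+C\mu R^{n+2}\bigr)\|\nabla m\|_{L^2}^2.$$
First choose $R=R(K)$ large enough that $CK^2R^{-2}\leq 1/18$; then choose $\mu=\mu(K)$ small enough that $C\mu R^{n+2}\leq 1/18$. Taking square roots delivers the desired bound with the explicit constant $1/3$.

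\textbf{Main obstacle.} The delicate point is that neither ingredient alone suffices: without the $K$-bound the cone could carry a disproportionate share of $\int|\theta|^2|\hat m|^2$ at high frequency, and without compact support the low-frequency estimate via the volume $|C_\mu\cap B_R|$ would fail. The role of $\mathcal G_K$ is precisely to rule out the former scenario, and the dependence $\mu=\mu(K)$ arises because the threshold $R$ is chosen of order $K$, after which $\mu$ must be small on the scale of $R^{-(n+2)}$.
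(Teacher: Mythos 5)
Your proof is correct and follows essentially the same route as the paper's: split the cone $\{|\theta_1|/|\theta|\leq\mu\}$ at a radius $R$ (the paper calls it $r$), bound the high-frequency tail via $R^{-2}\||\theta|^2\hat m\|_{L^2}^2$ together with the a priori $H^2$-to-$H^1$ bound and Poincar\'e, bound the low-frequency ball using compact support of $m$ plus a volume estimate of the truncated cone, and fix $R$ before sending $\mu\to 0$. The only cosmetic difference is that the paper estimates the pointwise value of $\theta\hat m(\theta)=\widehat{\nabla m}(\theta)$ directly by $\mathrm{Vol}(\Omega)^{1/2}\|\nabla m\|_{L^2}$, whereas you bound $\|\hat m\|_{L^\infty}$ and then absorb $|\theta|^2\le R^2$; both yield the same structure $\bigl(CK^2R^{-2}+C(\mu)R^{n+2}\bigr)\|\nabla m\|^2$ with $C(\mu)\to 0$.
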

\begin{proof}
\begin{align*}
\|\theta \hat m(\theta)\|^2_{L^2(|\theta_1|/|\theta|\leq \mu)}=\int_{|\theta_1|/|\theta|\leq \mu,\; |\theta|<r} |\theta \hat m(\theta)|^2\, d\theta+ \int_{|\theta_1|/|\theta|\leq \mu,\; |\theta|\geq r} |\theta \hat m(\theta)|^2\, d\theta.
\end{align*}
We need to estimate the two terms on the right hand side. Denote the bounded region $\{|\theta_1|/|\theta|\leq \mu,\; |\theta|<r\}$ by $U_{\mu, r}$, then
\begin{align*}
\int_{U_{\mu, r}} |\theta \hat m(\theta)|^2\, d\theta =\int_{U_{\mu,r}} |\int_{\Omega} e^{-i\theta x} \nabla m(x)\, dx|^2\, d\theta & \leq \int_{U_{\mu,r}} (\int_{\Omega} 1\, dx \int |\nabla m(x)|^2\, dx)\, d\theta\\
& =\mbox{Vol} (U_{\mu, r}) \mbox{Vol} (\Omega) \|\nabla m\|^2_{L^2},
\end{align*}
where Vol$(U_{\mu,r})=f^2(\mu) r^n$ with a positive function $f(\mu)\to 0$ as $\mu\to 0$, independent of $r$. Here Vol$(U)$ means the volume of a set $U$.
\begin{align*}
\int_{|\theta_1|/|\theta|\leq \mu,\; |\theta|\geq r} |\theta \hat m(\theta)|^2\, d\theta & \leq \int_{|\theta|\geq r} |\theta \hat m(\theta)|^2\, d\theta=\int_{|\theta|\geq r} \frac{|\theta|^2}{|\theta|^2} |\theta\hat m(\theta)|^2\, d\theta \\
& \leq r^{-2} \int_{|\theta|\geq r} (|\theta|^2 |\hat m(\theta)|)^2\, d\theta \leq C r^{-2} \|m\|^2_{H^2}.
\end{align*}
Apply the assumption $\|m\|_{H^2}\leq K\|m\|_{H^1}$, by the Poincar\'e inequality we have
$$\int_{|\theta_1|/|\theta|\leq \mu,\; |\theta|\geq r} |\theta \hat m(\theta)|^2\, d\theta\leq C r^{-2}\|m\|^2_{H^1}\leq C r^{-2} \|\nabla m\|^2_{L^2}$$
Combining the above argument
$$\|\theta \hat m(\theta)\|_{L^2(|\theta_1|/|\theta|\leq \mu)}\leq C(f(\mu) r^{n/2}+r^{-1}) \|\nabla m\|_{L^2}.$$
In particular, let $r=4C$, recall that $f(\mu)\to 0$ as $\mu\to 0$,
\begin{align*}
\|\theta \hat m(\theta)\|_{L^2(|\theta_1|/|\theta|\leq \mu)}\leq (C f(\mu)+\frac{1}{4}) \|\nabla m\|_{L^2}\leq \frac{1}{3}\|\nabla m\|_{L^2}
\end{align*}
when $\mu$ is sufficiently small.
\end{proof}

Now by the a priori estimate in Theorem \ref{main theorem}, $\|g_1-g_2\|_{H^2}\leq K\|g_1-g_2\|_{H^1}$, the estimate holds after diffeomorphisms that are sufficiently close to the identity map. Therefore we have $m=\tilde g_1^{-1}-\tilde g_2^{-1}$ satisfies similar estimate with a new fixed bound $K$. Apply Lemma \ref{fourier estimate in cone}, we get the estimate \eqref{L2 norm of m in cone} for sufficiently small $\mu>0$.

Finally, let $\mu=\epsilon^{1/8}$ for $\epsilon>0$ sufficiently small, we combine \eqref{>mu} and \eqref{L2 norm of m in cone} to get
\begin{align*}
\|\nabla m\|_{L^2}=\|\theta \hat m(\theta)\|_{L^2(\theta)} & \leq \|\theta \hat m(\theta)\|_{L^2(|\theta_1|/|\theta|>\mu)}+ \|\theta \hat m(\theta)\|_{L^2(|\theta_1|/|\theta|\leq \mu)}\\
& \leq (\frac{1}{3}+\frac{1}{3})\|\nabla m\|_{L^2} = \frac{2}{3} \|\nabla m\|_{L^2},
\end{align*}
so $\nabla m=0$. Note that $m$ is compactly supported, we get immediately that $m\equiv 0$. This proves Theorem \ref{main theorem}.

\section{Sketch of the proof of Theorem \ref{Riemannian case}}

In this section, we give a sketch of the proof of the improved result for the Riemannian case of Theorem \ref{Riemannian case}. 

Let $h_1$ and $h_2$ be two Riemannian metrics in $\mathbb{R}^{n}$, $n\geq 2$, which differ from the Euclidean metric $e$ only on $\overline{\Omega}$. Suppose $\|h_j-e\|_{C^{k}_0(\overline{\Omega})}\leq \epsilon$, $j=1,2$, $k\geq 2n+6$, for some sufficiently small $\epsilon>0$, and $h_1$, $h_2$ have the same boundary distance function, i.e. 
$$d_{h_1}|_{\p\Omega \times \p\Omega} = d_{h_2}|_{\p\Omega \times \p\Omega}.$$
By applying the argument of Lemma \ref{srelation}, see also \cite[Lemma 2.1]{SU98}, $h_1$ and $h_2$ have the same scattering relation. Then following the argument of Section \ref{construct diffeomorphism}, in particular let the hyperplane $H$ be $\{x^1=-\rho\}\subset \mathbb R^n\setminus \Omega$, we obtain diffeomorphisms $\psi_j$, $j=1,2$, so that $\tilde h_j:=\psi_j^* h_j$ has the form 
\begin{equation*} 
\left(
\begin{array}{cc}
	1 & 0_{1\times (n-1)} \\
	0_{(n-1)\times 1} & \ast_{(n-1)\times (n-1)}
\end{array}
\right).
\end{equation*}
Therefore $m=\tilde h_1^{-1}-\tilde h_2^{-1}$ takes the form
\begin{equation}\label{special form of m riemannian}
\left(
\begin{array}{cc}
	0 & 0_{1\times (n-1)} \\
	0_{(n-1)\times 1} & \ast_{(n-1)\times (n-1)}
\end{array}
\right).
\end{equation}

Similar to \eqref{identity3}, the integral identity w.r.t. the Riemannian geodesic flows implies
\begin{equation}\label{riem integral 1}
\int (\nabla m \xi\cdot\xi-2B_{21} m\xi +B_{22} \nabla m \xi\cdot\xi)\, ds=0,
\end{equation}
with $B_{21}, B_{22}=O(\epsilon)$ in $S^0_{k-4}$.
Then we follow the argument in Section \ref{proof of main thm}, denote $\eta=(\eta_1,\eta'),\; \eta'\in\mathbb R^{n-1}$ and $p\in \mathbb S^{n-2}$, let 
\begin{equation*}
\xi_1(\eta,p)=\frac{-\eta'\cdot p}{\sqrt{|\eta'\cdot p|^2+|\eta_1 |^2}},\quad \xi'(\eta,p)=\frac{\eta_1\;p}{\sqrt{|\eta'\cdot p|^2+|\eta_1|^2}}
\end{equation*}
give the initial direction $\xi^{(0)}=\xi(\eta, p)$. 
However, we modify the homogeneous cut-off function $\chi\in C^\infty(\mathbb R^n\setminus \{0\})$ as
\begin{equation*}
\chi_p(\eta)=\left\{ \begin{array}{lr}
 0, \quad \frac{|\eta'\cdot p|+|\eta_1|}{|\eta|}< \mu/2,\\[.5em]
   1, \quad \frac{|\eta'\cdot p|+|\eta_1|}{|\eta|}> \mu,
          \end{array} \right.
\end{equation*}
for some small $\mu>0$, so it avoids the singularity of $\xi(\eta,p)$, i.e. $\eta_1=0,\; \eta'\perp p$. Notice that if $n=2$, then $\chi_p(\eta)\equiv 1$ for $\eta\in \mathbb R^2\setminus \{0\}$.

Recall that the diffeomorphism $\psi_j$, $j=1,2$, maps straight lines to geodesics of metric $g_j$ with initial velocity $\xi^{(0)}=\pm e_1=\pm\p_{x^1}$ (so $\xi'(\eta,p)=0$, i.e. $\eta_1=0$), thus the pullback of $\xi=\xi(s)$ is constantly $\pm e_1$. This implies 
$$\xi=\xi(\eta,p)+\frac{\eta_1}{|\eta|}O(\epsilon)=\frac{\eta_1}{\sqrt{|\eta'\cdot p|^2+|\eta_1|^2}}\left[(\frac{-\eta'\cdot p}{\eta_1}, p)+O(\epsilon)\right]$$
with $O(\epsilon)\in S^0_{k-4}$. Similarly, in view of the definition of $B_{ij}$ in \eqref{derivative}, when $\xi^{(0)}=\pm e_1$, one can check that $B_{21}\equiv 0$. Thus generally $B_{21}=\frac{\eta_1}{|\eta|} \tilde B_{21}$ with $\tilde B_{21}=O(\epsilon)\in S^0_{k-5}$. We remark that the above asymptotic expressions does not hold for the Lorentzian case in Section \ref{proof of main thm}, since the pullback of timelike geodesics by a time invariant diffeomorphism are not straight lines in general. In particular, taking into account the structure of $m$ \eqref{special form of m riemannian}, the above analysis implies
\begin{align*}
\nabla m \xi\cdot \xi-2B_{21} m \xi+B_{22} \nabla m \xi\cdot \xi=\frac{\eta_1^2}{|\eta'\cdot p|^2+|\eta_1|^2} \bigg(\nabla m\; p\cdot p + D\; m+E\; \nabla m\bigg),
\end{align*}
where $D=O(\epsilon), E=O(\epsilon)$ in $S^0_{k-5}$, are matrix functions. Thus in the support of $\chi_p$, we can cancel the common factor $\eta_1^2/(|\eta'\cdot p|^2+\eta_1^2)$ from \eqref{riem integral 1} to get
$$\int (\nabla m\; p\cdot p + D\; m+E\; \nabla m)\, ds=0.$$

\noindent {\bf Remark:} {\it In the support of $\chi_p$, the common factor $\eta_1^2/(|\eta'\cdot p|^2+\eta_1^2)$, which is exactly $\psi_p^2(\eta)$ in Section \ref{proof of main thm}, could be zero, then its inverse will blow up. This is the reason we choose a different cut-off function $\chi$ in the Lorentzian case.}

Next we follow the approach in Section \ref{proof of main thm} by taking Fourier transform of the above integral identity, and to get eventually 
\begin{equation}
\begin{split}
\int \int e^{i(y-x)\cdot\theta} a(y)\chi_p (\theta) & a(x) \nabla m (x) p\cdot p \;dxd\theta\\
 & =\int \int e^{i(y-x)\cdot\theta} (O(\epsilon) m(x)+O(\epsilon) \nabla m(x))\, dx d\theta.
\end{split}
\end{equation}
Here $a(x)$ is a smooth cut-off function with $\supp{a}\subset B_\rho$ and $a(x)=1$ for $x\in\Omega$. The $O(\epsilon)$ on the right hand side of the above equality are in $S^0_{k-5}$.

It follows that 
\begin{align*}
\|\chi_p(\theta) \theta \hat m (\theta) p\cdot p\|_{L^2}^2 =O(\epsilon \|\nabla m\|^2),
\end{align*}
which 
yields
\begin{align*} \label{lc}
\theta \hat m (\theta) p\cdot p & =O(\sqrt{\epsilon}\|\nabla m\|)
\end{align*}
in $L^2(\frac{|\theta'\cdot p|+|\theta_1|}{|\theta|}>\mu)$ with $\mu>0$ a small parameter. Here $\hat m$ is the Fourier transform of $m$.

When $n=2$, $\{\frac{|\theta'\cdot p|+|\theta_1|}{|\theta|}>\mu\}=\mathbb R^2\setminus \{0\}$ for $\mu$ sufficiently small. Then it follows immediately that $\theta \hat m(\theta)=O(\sqrt{\epsilon}\|\nabla m\|)$ in $L^2(\mathbb R^2)$.

When $n\geq 3$, the set $\{\frac{|\theta'\cdot p|+|\theta_1|}{|\theta|}\leq \mu\}$ is a conic neighborhood of the $n-2$ dimensional plane $\{\theta_1=0, \theta'\cdot p=0\}$, with the vertex at the origin.
Notice that $p$ is an arbitrary vector on $\mathbb S^{n-2}$, taking into account the structure of $m$ \eqref{special form of m riemannian}, 
we choose $(n-1)^2$ vectors $p\in \mathbb S^{n-2}$, similar to the choice in \cite{SU98}, so that for $\mu$ is sufficiently small, we can derive that
$$\theta \hat m_{ij} (\theta)=O(\sqrt{\epsilon}\|\nabla m\|)\quad \mbox{in}\quad L^2(\mathbb R^n), \quad i,j=2,\cdots, n.$$



Finally, let $\epsilon>0$ be sufficiently small, we obtain
\begin{align*}
\|\nabla m\|_{L^2}=\|\theta \hat m(\theta)\|_{L^2(\theta)} & \leq C\sqrt{\epsilon} \|\nabla m\|_{L^2} \leq \frac{1}{2} \|\nabla m\|_{L^2},
\end{align*}
so $\nabla m=0$. Since $m$ is compactly supported, we get immediately that $m\equiv 0$. This proves Theorem \ref{Riemannian case}.



\begin{thebibliography}{aa}



\bibitem{ADH96} L. Andersson, M. Dahl, and R. Howard, \emph{Boundary and lens rigidity of Lorentzian surfaces}, Trans. Amer. Math. Soc., (1996) \textbf{348}, 2307-2329.

\bibitem{BI10} D. Burago, S. Ivanov, {\it Boundary rigidity and filling volume minimality of metrics close to
a flat one,} Annals of Math, {\bf 171} (2010), 1183--1211.

\bibitem{FIKO19} A. Feizmohammadi, J. Ilmavirta, Y. Kian, L. Oksanen, {\it Recovery of time dependent
coefficients from boundary data for hyperbolic equations,} arXiv preprint, 2019.

\bibitem{FIO20} A. Feizmohammadi, J. Ilmavirta, L. Oksanen, {\it The light ray transform in stationary and static Lorentzian geometries,} J Geom Anal (2020). https://doi.org/10.1007/s12220-020-00409-y.

\bibitem{FSU08} B. Frigyik, P. Stefanov, G. Uhlmann, {\it The X-ray transform for a generic family of curves and weights,} J. Geom. Anal. {\bf 18} (2008), 89--108.

\bibitem{He1905} G. Herglotz, {\it \"Uber die Elastizit\"at der Erde bei Ber\"ucksichtigung ihrer variablen Dichte,} Zeitschr. f\"ur Math. Phys., {\bf 52} (1905), 275--299.

\bibitem{HU19} P. Hintz, G. Uhlmann, {\it Reconstruction of Lorentzian manifolds from boundary light observation sets}, Int. Math. Res. Not., {\bf 2019} (2019), 6949--6987.

\bibitem{KLU18} Y. Kurylev, M. Lassas, G. Uhlmann, {\it Inverse problems for Lorentzian manifolds and non-linear hyperbolic equations,} Invent. Math. {\bf 212} (2018), 781--857.

\bibitem{LSSU17} R.-Y. Lai, R. Shankar, D. Spirn, G. Uhlmann, {\it An inverse problem from condense matter physics,} Inverse Problems, 33(11):115011, 2017.

\bibitem{LOSU18} M. Lassas, L. Oksanen, P. Stefanov, G. Uhlmann, {\it On the inverse problem of finding cosmic strings and other topological defects,} Commun. Math. Phys. {\bf 357} (2018), 569--595.

\bibitem{LOSU19} M. Lassas, L. Oksanen, P. Stefanov, G. Uhlmann, {\it The light ray transform on Lorentzian manifolds}, preprint, arXiv:1907.02210.


\bibitem{LOY16} M. Lassas, L. Oksanen, Y. Yang, {\it Determination of the spacetime from local time measure- ments,} Math. Ann. {\bf 365} (2016), 271--307.

\bibitem{LSU03} M. Lassas, V. Sharafutdinov, G. Uhlmann, {\it Semiglobal boundary rigidity for Riemannian metrics,} Math. Ann., {\bf 325} (2003), 767--793.

\bibitem{Mi81} R. Michel, {\it Sur la rigidit\'e impos\'ee par la longueur des g\'eod\'esiques}, Invent. Math. {\bf 65} (1981), 71--83.

\bibitem{ON90} B. O'Neill, {\it Semi-Riemannain geometry with applications to Relativity,} Academic Press, New York (1990).

\bibitem{PSUZ19} G. Paternain, M. Salo, G. Uhlmann, H. Zhou, {\it The geodesic X-ray transform with matrix weights,} Amer. J. Math., {\bf 141} (2019), 1707--1750.

\bibitem{PUZ19} G. P. Paternain, G. Uhlmann, H. Zhou, {\it Lens rigidity for a particle in a yang-mills field,} Comm. Math. Phys., {\bf 366} (2019), 681--707.

\bibitem{PU05} L. Pestov, G. Uhlmann, {\it Two dimensional simple Riemannian manifolds with boundary are boundary distance rigid,} Annals of Math., {\bf 161} (2005), 1093--1110.

\bibitem{St89} P. Stefanov, {\it Uniqueness of the multi-dimensional inverse scattering problem for time dependent potentials,} Math. Z., {\bf 201} (1989), 541--559.

\bibitem{St17} P. Stefanov, {\it Support theorems for the light ray transform on analytic Lorentzian manifolds,} Proc. Amer. Math. Soc. {\bf 145} (2017), 1259--1274.

\bibitem{SU97} P. Stefanov, G. Uhlmann, {\it Inverse backscattering for the acoustic equation,} SIAM J. Math. Anal., (1997) {\bf 28}, 1191--1204.

\bibitem{SU98b} P. Stefanov, G. Uhlmann, {\it Stability estimates for the hyperbolic Dirichlet to Neumann map in anisotropic media,} J. Funct. Anal., (1998) {\bf 154}, 330--358.

\bibitem{SU98} P. Stefanov, G. Uhlmann,
\emph{Rigidity for metrics with the same lengths of geodesics},
Math. Res. Lett., (1998) \textbf{5}, 83-96.

\bibitem{SU05} P. Stefanov, G. Uhlmann, {\it Boundary rigidity and stability for generic simple metrics,} J. Amer. Math. Soc., {\bf 18} (2005), 975--1003.

\bibitem{SUV16} P. Stefanov, G. Uhlmann, A. Vasy, {\it Boundary rigidity with partial data,} J. Amer. Math. Soc. {\bf 29} (2016), 299--332.

\bibitem{SUV17} P. Stefanov, G. Uhlmann, A. Vasy, {\it Local and global boundary rigidity and the geodesic X-ray transform in the normal gauge,} arXiv:1702.03638

\bibitem{SUVZ19} P. Stefanov, G. Uhlmann, A. Vasy, H. Zhou, {\it Travel time tomography,} Acta. Math. Sin. English Ser. {\bf 35} (2019), 1085--1114.

\bibitem{Wa99} J. Wang, {\it Stability for the reconstruction of a Riemannian metric by boundary measurements}, Inverse Problems, (1999) {\bf 15}, 1177--1192.

\bibitem{Wa18} Y. Wang. {\it Parametrices for the light ray transform on Minkowski spacetime,} Inverse
Probllems \& Imaging, 12(1):229237 (2018).

\bibitem{WZ1907} E. Wiechert, K. Zoeppritz, {\it \"Uber Erdbebenwellen,} Nachr. Koenigl. Geselschaft Wiss, Goettingen, {\bf 4} (1907), 415--549.

\bibitem{Zh17} H. Zhou, {\it Generic injectivity and stability of inverse problems for connections,} Comm. PDE, {\bf 42} (2017), 780--801.

\bibitem{Zh18} H. Zhou, {\it Lens rigidity with partial data in the presence of a magnetic field,} Inverse Problems and Imaging, {\bf 12} (2018), 1365--1387.


\end{thebibliography}
\end{document}